\def\csname ver@subfig.sty\endcsname{}
\theoremstyle{plain}\newtheorem{theorem}{Theorem}[section]
\theoremstyle{plain}\newtheorem{lemma}{Lemma}[section]
\newtheorem{pro}{Proposition}
\newtheorem{remark}{Remark}
\definecolor{marin}{rgb} {0., 0.3, 0.7}
\definecolor{rouge}{rgb} {0.8, 0., 0.}
\definecolor{sepia}{rgb} {0.8, 0.5, 0.}
\theoremstyle{definition}
\DeclareSymbolFont{largesymbol}{OMX}{yhex}{m}{n}
\DeclareMathAccent{\Widehat}{\mathord}{largesymbol}{"62}
\def\be{\begin{equation}}
\def\ee{\end{equation}}
\newdimen\GGGlength
\newdimen\GGGheight
\newbox\GGGbox
\def\GGGput[#1,#2](#3,#4)#5{%
  \setbox\GGGbox\vbox{\hbox{#5}\kern0pt}%
  \GGGlength\wd\GGGbox%
  \divide\GGGlength by100 \multiply\GGGlength by#1%
  \GGGheight\ht\GGGbox%
  \divide\GGGheight by100 \multiply\GGGheight by#2%
  \put(#3,#4){\kern-\GGGlength\raise-\GGGheight\box\GGGbox}}
\begin{document}

\title{Semi-Lagrangian methods for a plasma hybrid model with multi-species kinetic ions and massless electrons}
\date{}
\author[1]{Yingzhe Li}
\author[2]{Philip J. Morrison}
\author[1,3]{Eric Sonnendr\"ucker}
\affil[1]{Max Planck Institute for Plasma Physics, Boltzmannstrasse 2, 85748 Garching, Germany}
\affil[2]{Department of Physics and Institute for Fusion Studies, The University of Texas at Austin, Austin, TX78712, USA}
\affil[3]{Technical University of Munich, Department of Mathematics, Boltzmannstrasse 3, 85748 Garching, Germany}
\maketitle
%##########################
\begin{abstract}
The semi-Lagrangian methods with the improved number of one-dimensional advections are proposed for a plasma hybrid model with kinetic ions and mass-less electrons.  
Two subsystems with mass, momentum, and energy conservation are obtained by a Poisson bracket-based splitting method. For the subsystem in which the distribution functions and the fields are coupled, the second order and reversible modified implicit mid-point rule is used in time with 
 the specially designed mean velocity. The distribution functions are not involved in the iterations and are solved by exact splittings with only one dimensional advections, which makes the proposed schemes efficient. The cancellation problem is overcome by the numerical schemes constructed. Moreover, for the case with a periodic boundary condition, the magnetic field obtained is divergence free, mass, momentum, and energy are conserved. The methods can be extended to cases with multiple ion species.

\end{abstract}
\setcounter{tocdepth}{2} %to only see subsections

%\tableofcontents

\section{Introduction}

In this paper, we consider the semi-Lagrangian methods~\cite{semi1,semi2} with the improved number of one dimensional advections for a plasma hybrid model~\cite{YinL,cai2021hybrid} with kinetic ions and mass-less electrons: 
\begin{equation}\label{model}
\begin{aligned}
\text{kinetic ions:}\qquad &\frac{\partial f}{\partial t} + {\boldsymbol v} \cdot \frac{\partial f}{\partial {\boldsymbol x}} + \frac{q}{m} ({\boldsymbol E} + {\boldsymbol v} \times {\boldsymbol B}) \cdot \frac{\partial f}{\partial {\boldsymbol v}} = 0\,,
\\[2mm]
\text{Faraday's law:}\qquad &\frac{\partial {\boldsymbol B}}{\partial t} = - \nabla \times {\boldsymbol E}\,,\qquad \nabla \cdot {\boldsymbol B} = 0\,,
\\[1mm]
\text{Ohm's law:}  \qquad & {\boldsymbol E} = - \frac{\nabla p}{\rho} - \left(\boldsymbol u - \frac{\boldsymbol J}{\rho}  \right) \times {\boldsymbol B} \,.
\end{aligned}
\end{equation}
Here,  
  $f(t, {\boldsymbol x}, {\boldsymbol v})$ denotes the ion distribution function depending on time $t \in \mathbb{R}$, position ${\boldsymbol x} \in \mathbb{R}^3$ and velocity ${\boldsymbol v} \in \mathbb{R}^3$, each ion has charge $q$ and mass $m$, $\rho$ is the electron charge density with $\rho = \mathrm{e} n_e$, where $n_\mathrm{e}$ is the quasi-neutrality number density, which is equal to $n = \frac{q}{\mathrm{e}} \int f\,\mathrm{d}{\boldsymbol v} $ with the electron charge $-\mathrm{e}$, ${\boldsymbol u} = q \int {\boldsymbol v} f \mathrm{d}{\boldsymbol v}/\rho$ is the ion current carrying drift velocity,
${\boldsymbol E}(t, {\boldsymbol x})$ and ${\boldsymbol B}(t, {\boldsymbol x})$ stand for the electromagnetic fields, ${\boldsymbol J} = \frac{1}{\mu_0} \nabla \times {\boldsymbol B}$ denotes the plasma current with the vacuum permeability $\mu_0$, and $p$ is the pressure. When the electrons are isothermal or adiabatic, the pressure is determined by the density as $p = k_\mathrm{B}T_\mathrm{e} n_\mathrm{e}^\gamma$, where $k_\mathrm{B}$ is the Boltzmann constant, $T_\mathrm{e}$ is the electron temperature, and $\gamma$ is the ratio of specific heat. Specifically, $\gamma =1$ for isothermal electrons and $\gamma \neq 1$ for adiabatic electrons~\cite{chacon1}.  As mentioned in~\cite{le}, for more complex systems with gradients in the initial conditions,  it is necessary to include the following separate evolution equation for the electron pressure $p$~\cite{chacon1},
\begin{equation}\label{eq:firstpressure}
\frac{\partial p}{\partial t} + \nabla \cdot ({\boldsymbol u}_\mathrm{e}\, p) + (\gamma - 1)\, p\, \nabla \cdot {\boldsymbol u}_\mathrm{e} = 0, \quad \gamma \neq 1,
\end{equation}
where ${\boldsymbol u}_\mathrm{e} = {\boldsymbol u} - \frac{\boldsymbol J}{\rho}$ is the electron current carrying drift velocity. For isothermal/adiabatic electrons, the electron response is assumed to be fast and local, such that the pressure always follows the density according to an adiabatic law, 
electron pressure equation is more general and can be extended when non-adiabatic effects (e.g. heat transport, kinetic corrections) are important~\cite{chacon1,le,YinL}.

Plenty of particle-in-cell methods~\cite{current2D,Pegasus, CAMELIA,chacon1,yingzhe1,yingzhe2} and grid based methods~\cite{valentini,vlasiator,palmroth2018vlasov} have been proposed for this hybrid model.  Here, we focus on the latter, and use the semi-Lagrangian method~\cite{semi1,semi2} for solving the Vlasov equation.
One challenge for solving this hybrid model is to choose a suitable mean fluid velocity of the ions to advance the velocity distribution of the ions, such that the schemes have good conservation properties and are also efficient. In~\cite{valentini,vlasiator,palmroth2018vlasov}, the current advance method~\cite{current2D} and exact splitting methods~\cite{chenbao,welling} with moving (back) the frame are used to treat the nonlinearity of the mean velocity, and give efficient methods by solving the rotation part of the Vlasov equation as one dimensional advections. However, the methods in~\cite{valentini,vlasiator,palmroth2018vlasov} do not preserve mass, momentum, energy and other properties at the same time. 
The main improvement in current work includes, 1. The pressure term is considered in the exact splitting~\cite{exact1,exact2} and the specially designed mean velocity is used to overcome the cancellation problem~\cite{cancellation}; 2. The mid-point rule is used for the magnetic field and pressure, such that the mass, momentum, and energy are conserved.

%Motivated by the two challenges of the hybrid model~\cref{model} and the recent advancements in particle-in-cell methods~\cite{chacon1, yingzhe1, yingzhe2, GEMPIC,jianyuan, heyang,martin,struphy,dual}, this work aims to develop efficient grid-based numerical schemes for the hybrid model~\cref{model} that exhibit strong conservation properties.

The semi-Lagrangian methods with conservation properties for kinetic equations have been proposed and developed in the literature. We list some references here. The mass and momentum conserving properties of the cubic spline-based semi-Lagrangian method~\cite{semi1} for the Vlasov--Poisson equations have been given in~\cite{bookvlasov}. 
Conservative semi-Lagrangian methods have been developed, such as in~\cite{filbet,crouseilles2010conservative,xiong2014high}, in which the positivity of the distribution functions can be guaranteed when appropriate flux limiters are used. And
efficient semi-Lagrangian methods that conserve mass and energy have been constructed for the Vlasov--Amp\`ere equations in~\cite{hongtao} using the conservative semi-Lagrangian methods~\cite{filbet, xiong2014high} without filters. In this work, we choose to use the cubic spline based semi-Lagrangian methods~\cite{semi1} which has good conservation properties as mentioned in~\cite{bookvlasov}. 

In order to reduce the cost associated with the high-dimensional distribution functions,
for semi-Lagrangian methods~\cite{semi1, semi2}, splitting techniques~\cite{splitting}, such as directional splitting~\cite{directional,valentini} and Hamiltonian splitting~\cite{yingzhe6d, Nicolas, he, comment}, are employed to avoid high-dimensional interpolations, thereby reducing the problem to one-dimensional advections. A powerful technique is the exact splittings~\cite{exact1,exact2}, which does not have time discretization error, and  reduces the rotation part of the Vlasov equation, i.e., 
$\frac{\partial f}{\partial t}  + ({\boldsymbol v} \times {\boldsymbol B}) \cdot \frac{\partial f}{\partial {\boldsymbol v}} = 0$
into one dimensional advections.
In order to reduce the number of one dimensional advections, the splitting methods~\cite{splitting,Strang,HLW} with smaller number of sub-systems are preferable, motivated by which, in this work we present an exact splitting for $\frac{\partial f}{\partial t}  + ({\boldsymbol E} + {\boldsymbol v} \times {\boldsymbol B}) \cdot \frac{\partial f}{\partial {\boldsymbol v}} = 0$ by decomposing the electric field ${\boldsymbol E}$ as the sum of the perpendicular and parallel parts of the magnetic field ${\boldsymbol B}$.

Thanks to the exact splitting constructed for $\frac{\partial f}{\partial t}  + ({\boldsymbol E} + {\boldsymbol v} \times {\boldsymbol B}) \cdot \frac{\partial f}{\partial {\boldsymbol v}} = 0$, the Poisson splitting~\cite{2020en} based on the Poisson bracket~\cite{Poisson, Tronci}  
gives only two sub-steps when the electron pressure equation~\eqref{eq:firstpressure} is included in the model~\eqref{model}. 
For the sub-step in which the distribution functions are coupled with the magnetic field and pressure, the second order and reversible modified mid-point rule with the exact splittings~\cite{exact2, exact3, exact1} are used with a specifically chosen mean velocity, i.e., the time average of itself, such that mass, momentum, energy are conserved, and the cancellation problem~\cite{cancellation} is overcome.
And the Fourier spectral methods are used for the discretization of the fields.
 The proposed schemes are efficient in the sense that only one dimensional advections are needed for the update of the distribution functions thanks to the exact splitting, and the distribution functions are not involved in the iterations, which are only about the magnetic fields, pressure, and the moments of the distribution functions due to the good property of the cubic spline based semi-Lagrangian methods~\cite{semi1} for the velocity moments of the distribution functions. The other sub-step only involves the spatial advections, which also conserves mass, momentum, and energy exactly when Fourier spectral method~\cite{jieshen} or cubic spline-based semi-Lagrangian methods~\cite{semi1} are used.
 
With the time discretization and the specially chosen mean velocity mentioned above, all the above properties also hold when higher order spline based semi-Lagrangian methods~\cite{semi1,bookvlasov} are adopted for the distribution functions, the central finite difference method is used for the fields~\cite{chacon1}, and there are multi-species ions. 

The paper is organized as follows. In Section~\ref{sec:Poissonbrackets}, we introduce the Poisson bracket and describe the sub-steps defined by the Poisson splitting method. In Section~\ref{sec:Discretization}, the discretizations by Fourier spectral method and semi-Lagrangian method are conducted. In Section~\ref{sec:iso}, we show the schemes of the two sub-steps and the conservation properties.
Section~\ref{sec:reduced} presents a reduced model with one spatial dimension and two velocity dimensions, which is employed for numerical simulations. In section~\ref{sec:Numerical}, we validate the time accuracy order and the reversibility of the time semi-discretization, simulate the Landau damping and Bernstein waves, and present the errors of mass, momentum, and energy. 
In section~\ref{sec:conclusion}, we conclude the paper with a summary and an outlook to future works.

\section{Poisson brackets}\label{sec:Poissonbrackets}
As~\cite{yingzhe1} we normalize the hybrid model~\eqref{model} with the following characteristic scales,
\begin{equation*}
\begin{aligned}
&t = \frac{t'}{\Omega}\,, \quad {\boldsymbol v} = {\boldsymbol v'} v_\mathrm{A}\,,
\quad {\boldsymbol x} = {\boldsymbol x}' \frac{v_\mathrm{A}}{\Omega}\,,\quad {\boldsymbol B} = {\boldsymbol B}' B_0\,,\quad {\boldsymbol E} = {\boldsymbol E}' v_\mathrm{A} B_0\,, \quad p = p' mn_0v_\mathrm{A}^2,\\
& \rho = \rho' \mathrm{e}n_0\,,\quad \boldsymbol u = {\boldsymbol u}' v_\mathrm{A}\,,\quad \kappa := \frac{k_\mathrm{B} T_\mathrm{e}}{m v_\mathrm{A}^2}\,,\quad \boldsymbol J = \boldsymbol J' \mathrm{e} n_0 v_\mathrm{A}\,,\quad \mathcal H = \mathcal H' m n_0v_\mathrm{A}^2,
\end{aligned}
\end{equation*}
where primed quantities are dimension-less, $\Omega = qB_0/m$ is the ion cyclotron frequency, $B_0$ is the characteristic magnetic field strength, $v_\mathrm{A} = B_0/\sqrt{\mu_0 m n_0}$ denotes the Alfv\'en velocity with $n_0$ the characteristic particle density, and the parameter $\kappa$ denotes the ratio of $k_\mathrm{B} T_\mathrm{e}$ to $m v_\mathrm{A}^2$. Then we obtain the following dimensionless hybrid model with general electrons in the case of single ion species with unit charge~\footnote{We omit the primes on normalized quantities for simplicity in the remaining parts of this work.},
\begin{equation}
\label{eq:dimensionlessmodel}
\begin{aligned}
& \frac{\partial f}{\partial t} + {\boldsymbol v} \cdot \frac{\partial f}{\partial {\boldsymbol x}} +({\boldsymbol E} + {\boldsymbol v} \times {\boldsymbol B}) \cdot \frac{\partial f}{\partial {\boldsymbol v}} = 0\,,\\
&\frac{\partial {\boldsymbol B}}{\partial t} = - \nabla \times {\boldsymbol E}\,,\quad \nabla \cdot {\boldsymbol B} = 0,\, \quad {\boldsymbol u} = {\int {\boldsymbol v}f\, \mathrm{d}{\boldsymbol v}} \Big/\rho, \quad \rho = {\int f\, \mathrm{d}{\boldsymbol v} }, \\
& {\boldsymbol E} = - \frac{\nabla p}{\rho} - \left(\boldsymbol u - \frac{\boldsymbol J}{\rho}  \right) \times {\boldsymbol B}, \quad \boldsymbol J = \nabla \times {\boldsymbol B},\\
& \frac{\partial p}{\partial t} + \nabla \cdot ({\boldsymbol u}_\mathrm{e}\, p) + (\gamma - 1)\, p\, \nabla \cdot {\boldsymbol u}_\mathrm{e} = 0, \quad {\boldsymbol u}_\mathrm{e} = {\boldsymbol u} - \frac{\boldsymbol J}{\rho}, \quad \gamma \neq 1.
\end{aligned}
\end{equation}
where $\rho=\int f\, \mathrm{d}{\boldsymbol v}$ due to the quasi-neutrality condition. In the case with the isothermal/adiabatic electrons, the pressure $p$ is determined with the density $\rho$ as $p = \kappa \rho^\gamma$. 

In the following, we focus on the case with general electrons, the case with isothermal or adiabatic electrons can be considered similarly. 
 There is a Poisson bracket for model~\eqref{eq:dimensionlessmodel} proposed in~\cite{yingzhe1, Poisson} based on the results in~\cite{Tronci}. 
The Poisson bracket $\{\cdot,\cdot\}: V \times V \to V$, for the model~\eqref{eq:dimensionlessmodel} is, where $V$ denotes the vector space of functionals $\mathcal{F}: (f, {\boldsymbol B}, {\boldsymbol p}) \mapsto \mathbb R$ on the space of unknowns,
\begin{equation}\label{bra0}
\{\mathcal{F},\mathcal{G}\} = \{\mathcal{F},\mathcal{G} \}_{xv} + \{\mathcal{F},  \mathcal{G}\}_{pvb},
\end{equation}
\begin{equation*}
\begin{aligned}
&\{  \mathcal{F},  \mathcal{G} \}_{pvb} =  -\int f({\boldsymbol v})\, f({\boldsymbol v}')\, \frac{\boldsymbol B}{\rho}  \cdot \left( \nabla_v \frac{\delta  \mathcal{F}}{\delta f} \times \nabla_{v'} \frac{\delta \mathcal{G}}{\delta f} \right)\,\mathrm{d}{\boldsymbol x}\,  \mathrm{d}{\boldsymbol v}\, \mathrm{d}{\boldsymbol v}' \\
& + \int f  {\boldsymbol B} \cdot \nabla_v \frac{\delta  \mathcal{F}}{\delta f} \times \nabla_v \frac{\delta \mathcal{G}}{\delta f}\,  \mathrm{d}{\boldsymbol x}\,  \mathrm{d}{\boldsymbol v}  -\int \frac{\boldsymbol B}{\rho} \cdot \left(\nabla \times \frac{\delta  \mathcal{F}}{\delta {\boldsymbol B}}\right) \times \left(\nabla \times \frac{\delta \mathcal{G}}{\delta {\boldsymbol B}}\right)  \mathrm{d}{\boldsymbol x}\\
&+ \int \frac{f}{\rho}  \boldsymbol{B} \cdot \left[ \nabla_v \frac{\delta  \mathcal{F}}{\delta f} \times \left(\nabla \times \frac{\delta \mathcal{G}}{\delta {\boldsymbol B}}\right) -   \nabla_v \frac{\delta \mathcal{G}}{\delta f} \times \left(\nabla \times \frac{\delta  \mathcal{F}}{\delta {\boldsymbol B}}\right)\right] \mathrm{d}{\boldsymbol x}\,  \mathrm{d}{\boldsymbol v} \\
& +\int \gamma p \left( \frac{\delta \mathcal{F}}{\delta p} \nabla \cdot \frac{\nabla \times \frac{\delta \mathcal{G}}{\delta {\boldsymbol B}} - \int f\, \nabla_v \frac{\delta \mathcal{G}}{\delta f}\, \mathrm{d}{\boldsymbol v}}{\rho} - \frac{\delta \mathcal{G}}{\delta p} \nabla \cdot \frac{\nabla \times \frac{\delta \mathcal{F}}{\delta {\boldsymbol B}} - \int f\, \nabla_v \frac{\delta \mathcal{F}}{\delta f}\, \mathrm{d}{\boldsymbol v}}{\rho} \right) \mathrm{d}{\boldsymbol x} \\
 &+ \int \frac{\nabla p}{\rho} \cdot \left( \frac{\delta \mathcal{F}}{\delta p} \left( \nabla \times \frac{\delta \mathcal{G}}{\delta {\boldsymbol B}} - \int f\, \nabla_v \frac{\delta \mathcal{G}}{\delta f} \mathrm{d}{\boldsymbol v} \right) - \frac{\delta \mathcal{G}}{\delta p} \left( \nabla \times \frac{\delta \mathcal{F}}{\delta {\boldsymbol B}} - \int f\, \nabla_v \frac{\delta \mathcal{F}}{\delta f} \mathrm{d}{\boldsymbol v} \right) \right) \mathrm{d}{\boldsymbol x},\\
 &\{\mathcal{F}, \mathcal{G} \}_{xv} = \int f  \left(\nabla_x\frac{\delta  \mathcal{F}}{\delta f} \cdot \nabla_v \frac{\delta \mathcal{G}}{\delta f}
 - \nabla_v\frac{\delta  \mathcal{F}}{\delta f} \cdot \nabla_x \frac{\delta \mathcal{G}}{\delta f}\right)  \mathrm{d}{\boldsymbol x}\,  \mathrm{d}{\boldsymbol v}.
\end{aligned}
\end{equation*}
 The energy of the dimensionless hybrid model~\eqref{eq:dimensionlessmodel} is 
\begin{equation}\label{eq:hamiltonianpressu}
\mathcal{E} = \frac{1}{2} \int |{\boldsymbol v}|^2f\, \mathrm{d}{\boldsymbol x}\, \mathrm{d}{\boldsymbol v}
 + \frac{1}{2} \int |{\boldsymbol B}|^2\, \mathrm{d}{\boldsymbol x}\, + \int \frac{p}{\gamma -1}\, \mathrm{d}{\boldsymbol x}=: K_f + K_B + K_p, \quad \gamma \neq 1.
\end{equation}
With energy~\eqref{eq:hamiltonianpressu} and Poisson bracket~\eqref{bra0}, the hybrid model~\eqref{eq:dimensionlessmodel}  can  be written in a Poisson bracket form $\dot{\mathcal{Z}} = \{\mathcal{Z}, \mathcal{E}\}$ with $\mathcal{Z} = (f, {\boldsymbol B}, p)$. The geometric particle-in-cell methods have been constructed based on Poisson brackets in~\cite{GEMPIC,jianyuan, heyang,martin,dual} for the Vlasov--Maxwell equations.

The time discretization is obtained by the Poisson splitting methods~\cite{2020en} based on the decomposition 
of the full bracket~\eqref{bra0} into two parts, which gives 
two sub-steps conserving mass $\mathcal{M} = \int f\, \mathrm{d}{\boldsymbol x}\,\mathrm{d}{\boldsymbol v}$, momentum $\mathcal{P} = \int {\boldsymbol v} f\, \mathrm{d}{\boldsymbol x}\,\mathrm{d}{\boldsymbol v}$, and energy $\mathcal{E}$. The first sub-step called $pvb$, corresponding to the sub-bracket $\{\mathcal{F}, \mathcal{G}\}_{pvb},
$
is 
\begin{equation}
\label{eq:pressure_bv}
\begin{aligned}
& \frac{\partial f}{\partial t} +({\boldsymbol E} + {\boldsymbol v} \times {\boldsymbol B}) \cdot \frac{\partial f}{\partial {\boldsymbol v}} = 0\,,\\
&\frac{\partial {\boldsymbol B}}{\partial t} = - \nabla \times {\boldsymbol E},\quad {\boldsymbol E} = -\frac{\nabla p}{\rho} - \left(\boldsymbol u - \frac{\boldsymbol J}{\rho}  \right) \times {\boldsymbol B},\\
&\frac{\partial p}{\partial t} + \nabla \cdot ({\boldsymbol u}_\mathrm{e}\, p) + (\gamma - 1)\, p\, \nabla \cdot {\boldsymbol u}_\mathrm{e} = 0, \quad {\boldsymbol u}_\mathrm{e} = {\boldsymbol u} - \frac{\boldsymbol J}{\rho}, \quad \gamma \neq 1.
\end{aligned}
\end{equation}
The second sub-step called $xv$, corresponding to sub-bracket $\{\mathcal{F}, \mathcal{G}\}_{xv}$, is 
\begin{equation}\label{eq:pressure_xv}
     \frac{\partial f}{\partial t} = - \boldsymbol v \cdot \nabla_x f. 
\end{equation}

We can derive the equation of ${\boldsymbol J}_f$ or ${\boldsymbol u}$ from the sub-step $pbv$~\eqref{eq:pressure_bv} 
\begin{equation}\label{eq:cancellation} 
\frac{\partial{\boldsymbol J}_f}{\partial t} = -\nabla p + {\boldsymbol J} \times {\boldsymbol B},\,\, \text{or}\,\, \,\frac{\partial{\boldsymbol u}}{\partial t} = -\frac{\nabla p}{\rho} + \frac{{\boldsymbol J} \times {\boldsymbol B}}{\rho},
\quad  {\boldsymbol J}_f = \int {\boldsymbol v} f\,  \mathrm{d}{\boldsymbol v} = \rho{\boldsymbol u},
\end{equation}
i.e., the contribution of the two terms, ${\boldsymbol v} \times {\boldsymbol B}$ and $-{\boldsymbol u} \times {\boldsymbol B}$, cancels to each other in the time evolution equation of the ${\boldsymbol J}_f$ or ${\boldsymbol u}$.
This property needs to be preserved after numerical discretization, otherwise the so-called cancellation problem would arise~\cite{cancellation}.

\section{Discretization}\label{sec:Discretization}
In this section, we present the details of the discretization, which will be used in the discretization of sub-steps in Section~\ref{sec:iso}. Periodic boundary conditions are imposed in space.
Fourier spectral methods~\cite{jieshen} and semi-Lagrangian methods~\cite{semi1} are presented in detail.
For the time discretization of the unknown $a$, we denote its approximation at $n$-th time step, i.e., at $t=n\Delta t$, as $a^n$, where $\Delta t$ is the time step size. For field unknown $a$, $a^{n+\frac{1}{2}} = \frac{1}{2}(a^n+a^{n+1})$.

Here, we define the notations used in the phase-space discretization.
We consider the domain $\Omega = \Omega_x \times \Omega_v$, in which $
\Omega_x = \Pi_{\alpha=1}^3[0, L_\alpha], \Omega_v = \Pi_{\alpha=1}^3 [-v^\alpha_l, v^\alpha_r]
$,
$L_\alpha$, $v^\alpha_l$, and $v^\alpha_r$ are  real numbers, $v^\alpha_l$ and $v^\alpha_r$ are chosen such that it is also reasonable to assume periodic boundary conditions in velocity. We use uniform grids in space and velocity. The grids and duals of grids~\cite{exact1} (in Fourier spaces) in the $\alpha$-th space direction are 
\begin{equation}
\label{eq:frequency}
\begin{aligned}
&\mathbb{G}^{x_\alpha} =\Delta x_\alpha \left[\!\!\left[0,M_\alpha-1 \right]\!\! \right], \quad \Delta x_\alpha = L_\alpha/M_\alpha,\quad \hat{\mathbb{G}}^{x_\alpha} = \frac{2\pi}{L_\alpha} \left[\!\!\left[ -\Big\lfloor\frac{M_\alpha-1}{2}\Big\rfloor,\Big\lfloor\frac{M_\alpha}{2}\Big\rfloor \right]\!\! \right],
\end{aligned}
\end{equation}
where the $M_\alpha$ denotes the number of grid points in $\alpha$-th space direction, the variable implicitly naturally associated with $\mathbb{G}^{x_\alpha}$ (resp.
$\hat{\mathbb{G}}^{x_\alpha}$) is denoted as $x^\alpha$ (resp. $\xi^\alpha$), $x^\alpha_{i_\alpha}$ ($\xi^\alpha_{k_\alpha}$) denotes the $i_\alpha$-th ($k_\alpha$-th) grid in $\mathbb{G}^{x_\alpha}$ ($\hat{\mathbb{G}}^{x_\alpha}$).
The grids in the $\alpha$-th direction of the velocity are 
\begin{equation*}
\begin{aligned}
&\mathbb{G}^{v_\alpha} =\Delta v_\alpha \left[\!\!\left[0,N_\alpha-1 \right]\!\! \right], \quad \Delta v_\alpha = \frac{v^\alpha_r - v^\alpha_l}{N_\alpha},
\end{aligned}
\end{equation*}
where $N_\alpha$ denotes the number of grid points in the $\alpha$-th velocity direction,  and  the variable implicitly naturally associated with $\mathbb{G}^{v_\alpha}$ is denoted as $v^\alpha$, and $v^\alpha_{j_\alpha}$ denotes the $j_\alpha$-th grid in $\mathbb{G}^{v_\alpha}$. We also define $M=M_1M_2M_3$, $N=N_1N_2N_3$, $\Delta {\boldsymbol x} :=\Delta  x_1 \Delta  x_2 \Delta  x_3$, and $\Delta {\boldsymbol v} := \Delta v_1\Delta v_2\Delta v_3$.
We give the variable  ${\mathbf x}$ implicitly associated with the three dimensional grids    $\mathbb{G}^x=\Pi_{\alpha=1}^3\mathbb{G}^{x_\alpha} 
 (\text{resp.}\, \mathbb{G}^v=\Pi_{\alpha=1}^3\mathbb{G}^{v_\alpha})$~\cite{dual}
\begin{equation*}
\begin{aligned}
&{\mathbf x}_{\boldsymbol i} = (x^1_{i_1}, x^2_{i_2}, x^3_{i_3})^\top,\, {\boldsymbol i} = (i_1,i_2,i_3) \in \left[\!\!\left[{\boldsymbol 1},{\mathbf M} \right]\!\! \right]:=\Pi_{\alpha=1}^3\left[\!\!\left[1, M_\alpha \right]\!\! \right],\\
&{\mathbf v}_{\boldsymbol j} = (v^1_{j_1}, v^2_{j_2}, v^3_{j_3})^\top,\, {\boldsymbol j} = (j_1,j_2,j_3) \in \left[\!\!\left[{\boldsymbol 1},{\mathbf N} \right]\!\! \right]:=\Pi_{\alpha=1}^3\left[\!\!\left[1, N_\alpha \right]\!\! \right].
\end{aligned}
\end{equation*} 
Similarly for the frequency, 
we have the variable ${\boldsymbol \xi}$ implicitly associated with the duals of grids $\hat{\mathbb{G}}^x=\Pi_{\alpha=1}^3\hat{\mathbb{G}}^{x_\alpha}$  
\begin{equation*}
    \begin{aligned}
&{\boldsymbol \xi}_{ k} = (\xi^1_{k_1}, \xi^2_{k_2}, \xi^3_{k_3})^\top,\, {\boldsymbol k} = (k_1,k_2,k_3) \in \left[\!\!\left[{\boldsymbol 1},{\mathbf M} \right]\!\! \right]:=\Pi_{\alpha=1}^3\left[\!\!\left[1, M_\alpha \right]\!\! \right].
\end{aligned}
\end{equation*} 

\noindent{\bf Fourier basis, operators, and discretization.}
Here, we define the Fourier basis and corresponding matrices of the differential operators as~\cite{martin}. 
In the $x_\alpha$ direction we have the Fourier basis, 
$
{\boldsymbol \Lambda}^{\alpha} = \left(e^{\mathrm{i}\xi^\alpha_1x_\alpha}, \cdots, e^{\mathrm{i}\xi^\alpha_{M_\alpha}x_\alpha}\right)
$.
We define the three dimensional basis via the tensor products of ${\boldsymbol \Lambda}^{\alpha}$, i.e., we have 
$$\{
{\Lambda}_{\boldsymbol k}({\boldsymbol x}) = \Lambda^{1}_{k_1}\Lambda^{2}_{k_2}\Lambda^{3}_{k_3},\ {\boldsymbol k} = (k_1,k_2,k_3) \in \left[\!\!\left[{\boldsymbol 1},{\mathbf M} \right]\!\! \right]:=\Pi_{\alpha=1}^3\left[\!\!\left[1, M_\alpha \right]\!\! \right]\}
$$
We denote ${\boldsymbol \Lambda} = (\Lambda_k)_{1\le k \le M} \in \mathbb{R}^M$.
For the function $p({\boldsymbol x})$, its Fourier approximation is denoted as $p({\boldsymbol x}) \approx p_h({\boldsymbol x}) = \sum_{k=1}^M \hat{p}_k \Lambda_k =: \hat{\mathbf p}^\top{\boldsymbol \Lambda}$, the values of $p_h$ at grid points $\mathbb{G}^x$ are ${\mathbf p} \in \mathbb{R}^M$ with $p_i = p_h({\mathbf x}_i)$, which is associated with $\hat{\mathbf p}$ via discrete Fourier transform. Similar notations can be given for the $\alpha$-th component of the magnetic field $B_\alpha({\boldsymbol x})$, which is approximated as 
$$
B_\alpha({\boldsymbol x}) \approx B_{\alpha,h}({\boldsymbol x}) = \sum_{k=1}^M \hat{B}_{\alpha,k}\Lambda_k = (\hat{\mathbf B}_\alpha)^\top {\boldsymbol \Lambda},
$$
where $\hat{\mathbf B}_\alpha \in \mathbb{R}^M$ are the Fourier coefficients obtained by discrete Fourier transform for the point values of $B_{\alpha,h}({\boldsymbol x})$ at the grids $\mathbb{G}^x$, i.e., ${\mathbf B}_\alpha \in \mathbb{R}^M$ with its $i$-th component as ${ B}_{\alpha,i} = {B}_{\alpha,h}({\mathbf x}_i)$. 
For the vector valued magnetic field ${\boldsymbol B}$ we have the discretization as 
\begin{equation*}
\begin{aligned}
&{\boldsymbol B}_h^\top = \left( \hat{\mathbf B}_1^\top, \hat{\mathbf B}_2^\top, \hat{\mathbf B}_3^\top \right)
\left(\begin{matrix}
  {\mathbf \Lambda} & 0 & 0 \\
    0 & {\mathbf \Lambda}  & 0 \\
      0 & 0 & {\mathbf \Lambda}
\end{matrix} \right) =: \hat{\mathbf B}^\top \mathbb{\Lambda},
\end{aligned}
\end{equation*}
where $\mathbb{\Lambda} \in \mathbb{R}^{M \times 3}$. The values of magnetic field ${\boldsymbol B}_h$ at grids $\mathbb{G}^x$ are denoted as 
$${\mathbf B} = \left({\mathbf B}_1^\top, {\mathbf B}_2^\top, {\mathbf B}_3^\top\right)^\top \in \mathbb{R}^{3M}.
$$

The matrices corresponding to three differential operators, i.e., gradient $\nabla$, curl $\nabla \times $, and divergence $\nabla \cdot$, in the Fourier basis, are explicitly given in~\cite{martin} via the tensor products of the diagonal one-dimensional derivative matrices. And the spaces spanned by Fourier basis for the scalar and vector valued functions form a de Rham sequence, more details can be found in~\cite{martin}.

 We denote the approximation of the distribution function at uniform grids as 
${\mathbf f} = (f_{{\boldsymbol i}{\boldsymbol j}})$ when the cubic spline based semi-Lagrangian method~\cite{semi1} is adopted. 
The Fourier coefficients $\hat{\mathbf f} = (\hat{f}_{{\boldsymbol k}{\boldsymbol j}})$ are obtained by discrete Fourier transform in space for ${\mathbf f}$. With ${\mathbf f}$, we can take the discrete velocity moments and get ${\boldsymbol \rho}$, ${\mathbf J}_{f,\alpha}$, ${\mathbf u}$, and ${\mathbf u}_e$, with their $i$-th values 
\begin{equation*}
\begin{aligned}
&\rho_{\boldsymbol i} = \sum_{{\boldsymbol j}={\mathbf 1}}^{\mathbf N} f_{{\boldsymbol i}{\boldsymbol j}}\, \Delta {\boldsymbol v}, \quad J_{f,\alpha,{\mathbf i}} = \sum_{{\boldsymbol j}={\mathbf 1}}^{\mathbf N} f_{{\boldsymbol i}{\boldsymbol j}}\,v^\alpha_{j_\alpha}\Delta {\boldsymbol v}, \\ &u_{\alpha,{\boldsymbol i}} = J_{f,\alpha,{\boldsymbol i}}/\rho_{\boldsymbol i}, \quad u_{e,\alpha,{\boldsymbol i}} = u_{\alpha,{\boldsymbol i}} - (\nabla \times {\boldsymbol B}_h)_\alpha({\mathbf x}_{\boldsymbol i})/\rho_{\boldsymbol i},
\end{aligned}
\end{equation*}
which are associated with their Fourier coefficients $\hat{\boldsymbol \rho}$, $\hat{\mathbf J}_{f,\alpha}$, $\hat{\mathbf u}$, and $\hat{\mathbf u}_e$ via discrete Fourier transform. With the Fourier coefficients, we have the trigonometric interpolation polynomials 
$\rho \approx \rho_h = {\boldsymbol \Lambda}^\top \hat{\boldsymbol \rho}$, $J_{f,\alpha} \approx J_{f,\alpha,h} = {\boldsymbol \Lambda}^\top \hat{\mathbf J}_{f,\alpha}$, ${\boldsymbol u}_\alpha \approx {\boldsymbol u}_{\alpha,h} = {\boldsymbol \Lambda}^\top \hat{\mathbf u}_{\alpha}$, and ${\boldsymbol u}_{e,\alpha} \approx {\boldsymbol u}_{e,\alpha,h} = {\boldsymbol \Lambda}^\top \hat{\mathbf u}_{e,\alpha}$
For the vector valued functions, we have the notations
\begin{equation*}
    \begin{aligned}
&{\mathbf J}_{f} = ({\mathbf J}_{f,1}^\top, {\mathbf J}_{f,2}^\top, {\mathbf J}_{f,3}^\top)^\top, \quad {\mathbf J}_{f,{\boldsymbol i}} =  ({J}_{f,1,{\boldsymbol i}}, {J}_{f,2,{\boldsymbol i}}, {J}_{f,3,{\boldsymbol i}})^\top,\\
&{\mathbf u} = ({\mathbf u}_{1}^\top, {\mathbf u}_{2}^\top, {\mathbf u}_{3}^\top)^\top, \quad {\mathbf u}_{\boldsymbol i} =  ({u}_{1,{\boldsymbol i}}, {u}_{2,{\boldsymbol i}}, {u}_{3,{\boldsymbol i}})^\top,\\
&{\mathbf u}_e = ({\mathbf u}_{e,1}^\top, {\mathbf u}_{e,2}^\top, {\mathbf u}_{e,3}^\top)^\top, \quad {\mathbf u}_{e,{\boldsymbol i}} =  ({u}_{e,1,{\boldsymbol i}}, {u}_{e,2,{\boldsymbol i}}, {u}_{e,3,{\boldsymbol i}})^\top.
\end{aligned}
\end{equation*}

\noindent{\bf Computation of the one-dimensional advections.}
In this work, the distribution function is updated via only one dimensional advections with the semi-Lagrangian method~\cite{semi1}, so here we present the details of solving the one dimensional transport equation of the function $f(t, z)$ 
\begin{equation}\label{eq:1dtrans}
 \frac{\partial f}{\partial t} + c \frac{\partial f}{\partial z} = 0, \quad c \in \mathbb{R}.   
\end{equation}
The domain is $[a, b]$ and a periodic boundary condition is assumed. We use a uniform grid $\mathbb{G}^z = \{z_i = a + \Delta z, \Delta z = (b-a)/N, 1 \le i \le  N$\}, and its duals $\hat{\mathbb{G}}^z$ as~\eqref{eq:frequency}, the variable implicitly naturally associated with $\mathbb{G}^{z}$ (resp.
$\hat{\mathbb{G}}^{z}$) is denoted as $z$ (resp. $\beta$), 
The discretization of $f$, i.e., ${\mathbf f} =(f_i),1\le i \le N$ are a function defined on $\mathbb{G}^z$. 
When we use Fourier spectral method, ${\mathbf f}$ is updated in time as
\begin{equation}\label{eq:velospec}
{\mathbf f}^{n+1} =\mathcal{F}_z^{-1} e^{-\mathrm{i}c\Delta t{\beta}}\mathcal{F}_z\,{\mathbf f}^n,
\end{equation}
where $\mathcal{F}_z$ is the matrix of discrete Fourier transform,
$$
\mathcal{F}_z: \mathbb{C}^{\mathbb{G}^z} \rightarrow \mathbb{C}^{\hat{\mathbb{G}}^z}: {\mathbf f} \rightarrow \hat{\mathbf f} = \sum_{z \in \mathbb{G}} f_z e^{-\mathrm{i}z\beta}
$$
$\mathcal{F}_z\,{\mathbf f}$ gives the Fourier coefficients of ${\mathbf f}$, i.e., $\hat{\mathbf f}:=(\hat{f}_k)$ defined on $\hat{\mathbb{G}}^z$.
\begin{lemma}\label{lemmapu}
For the translation~\eqref{eq:velospec} conducted with Fourier spectral method, we have 
$
\sum_{i=1}^N f^{n+1}_i = \sum_{i=1}^N f^{n}_i 
$.
\end{lemma}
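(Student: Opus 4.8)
The plan is to identify the conserved sum with the zero-frequency Fourier mode and to observe that the Fourier multiplier acts trivially on that mode. First I would recall from the definition of the discrete Fourier transform $\mathcal{F}_z$ that the coefficient associated with frequency $\beta = 0$ is precisely
$\hat{f}_{\beta=0} = \sum_{z \in \mathbb{G}^z} f_z\, e^{-\mathrm{i} z \cdot 0} = \sum_{i=1}^N f_i$,
so the quantity to be conserved is nothing but the zero-frequency component of $\mathbf{f}$. This reframing is the whole content of the lemma.

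Next I would examine how the update \eqref{eq:velospec} acts on this component. Since $\mathbf{f}^{n+1}$ is obtained by multiplying each Fourier coefficient $\hat{f}^n_\beta$ by the diagonal multiplier $e^{-\mathrm{i} c \Delta t \beta}$ and then applying $\mathcal{F}_z^{-1}$, the transformed coefficient at frequency $\beta$ is $\hat{f}^{n+1}_\beta = e^{-\mathrm{i} c \Delta t \beta}\, \hat{f}^n_\beta$. Evaluating at $\beta = 0$ gives the factor $e^{0} = 1$, hence $\hat{f}^{n+1}_{\beta=0} = \hat{f}^n_{\beta=0}$.

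Combining the two steps then yields
$\sum_{i=1}^N f^{n+1}_i = \hat{f}^{n+1}_{\beta=0} = \hat{f}^{n}_{\beta=0} = \sum_{i=1}^N f^{n}_i$,
which is the claim. Note that the conservation is exact and holds for every $c$ and every $\Delta t$, since it relies only on the diagonal (spectral-multiplier) form of the update together with the unimodularity of the multiplier at $\beta = 0$.

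I do not expect a genuine obstacle here: the argument is immediate once the sum is recognized as the $\beta = 0$ mode. The only point requiring a small amount of care is the bookkeeping of the frequency grid $\hat{\mathbb{G}}^z$ defined as in \eqref{eq:frequency}, namely that $\beta = 0$ is indeed among the retained frequencies, i.e. that $0$ lies in the index range $\left[\!\!\left[ -\lfloor (N-1)/2\rfloor,\ \lfloor N/2\rfloor \right]\!\!\right]$; this holds for every $N \ge 1$. With this verified, the invertibility of $\mathcal{F}_z$ closes the proof.
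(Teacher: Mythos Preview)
Your proposal is correct and follows essentially the same approach as the paper: identify $\sum_i f_i$ with the zero-frequency Fourier coefficient and observe that the spectral multiplier $e^{-\mathrm{i}c\Delta t\beta}$ equals $1$ at $\beta=0$. The paper phrases this via the explicit index $\lfloor (N-1)/2\rfloor+1$ of the zero mode in $\hat{\mathbb{G}}^z$, but the argument is the same.
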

\begin{proof}
Based on the definition of discrete Fourier transform~\cite{jieshen}, we have 
$$\sum_{i=1}^N f^{n+1}_i = \hat{f}_{\lfloor\frac{N-1}{2}\rfloor+1}^{n+1},\quad  \sum_{i=1}^N f^{n}_i = \hat{f}_{\lfloor\frac{N-1}{2}\rfloor+1}^{n}.$$
As $\beta_{\lfloor\frac{N-1}{2}\rfloor+1}=0$, $\hat{f}_{\lfloor\frac{N-1}{2}\rfloor+1}^{n}=\hat{f}_{\lfloor\frac{N-1}{2}\rfloor+1}^{n+1}$ and $\sum_{i=1}^N f^{n+1}_i = \sum_{i=1}^N f^{n}_i $.
\end{proof}
When we use the cubic spline based semi-Lagrangian method~\cite{semi1}, firstly we reconstruct a continuous function $\mathcal{I}f^n(z) := \sum_{i=1}^N e_i C(z-z_j)$ using $(f^n_i)$, where $C(z)$ is the cubic B-spline, 
\begin{equation*}
C(z) = \frac{1}{6}
\left\{
    \begin{alignedat}{2}
    &\left(2-{|z|}/{\Delta z}\right)^3 \quad & \quad &\text{if} \quad \Delta z \le |z| < 2\Delta z\\
    & 4-6(|z|/\Delta z)^2 + 3(|z|/\Delta z)^3 \quad & \quad &\text{if} \quad 0 \le |z| < \Delta z\\
    & 0  & \quad &\text{else}
    \end{alignedat}
\right.
\end{equation*}
and $(e_i)$ is obtained by a fast solver given in~\cite{bookvlasov} when periodic boundary condition is imposed. When the support of $C(z-z_j)$ extends beyond the domain, we handle it using the periodicity assumption.
Then the distribution function is updated as 
\begin{equation}\label{eq:cubicsemi}
f^{n+1}_i = \mathcal{I}f^n(z_i-c\Delta t).
\end{equation}
The cubic splines on the uniform grids $\{z_i\},i\in \mathbb{Z}$ of the velocity domain $\mathbb{R}$ have the following properties about its moments~\cite{udovicic2009calculation}. 
\begin{pro}\label{pro:cubicmoment} 
For the cubic spline $C(z)$, we have
$$
\int_{\mathbb{R}} z^s\, C(z)\, \mathrm{d}{z} = \sum_{i \in \mathbb{Z}} (z_i+r)^s\, C(z_i+r)\, \Delta z, \quad \forall r \in \mathbb{R}, s = 0, 1, 2.
$$
\end{pro}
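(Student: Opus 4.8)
The plan is to read the right-hand side as the value, at shift $r$, of the \emph{periodization} of the function $h_s(z) := z^s C(z)$, and to evaluate it by Poisson summation. Writing $z_i = i\Delta z$ (any grid offset can be folded into $r$, which is precisely why the identity is asserted for all $r$), set
\be
P_s(r) := \sum_{i\in\mathbb{Z}} h_s(z_i + r) = \sum_{i\in\mathbb{Z}} (i\Delta z + r)^s\, C(i\Delta z + r).
\ee
Since $C$ is supported in $[-2\Delta z,2\Delta z]$ and is $C^2$, each sum is locally finite, so $P_s$ is a $C^2$, $\Delta z$-periodic function of $r$. The proposition is then equivalent to the statement that $\Delta z\,P_s(r)$ is constant in $r$ and equals $\int_{\mathbb R} z^s C(z)\,\mathrm d z$ for $s=0,1,2$.

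First I would expand $P_s$ in its Fourier series on the cell $[0,\Delta z)$. Computing the Fourier coefficients, exchanging the locally finite sum with the integral and using $\mathrm e^{-\mathrm i 2\pi m i}=1$, yields $c_m = \frac{1}{\Delta z}\,\widehat{h_s}(2\pi m/\Delta z)$ with $\widehat{h_s}(\xi)=\int_{\mathbb R} h_s(z)\,\mathrm e^{-\mathrm i \xi z}\,\mathrm d z$; this is exactly Poisson summation for $h_s$. Hence
\be
\Delta z\, P_s(r) = \sum_{m\in\mathbb Z} \widehat{h_s}\!\left(\tfrac{2\pi m}{\Delta z}\right)\mathrm e^{\mathrm i 2\pi m r/\Delta z},
\ee
whose $m=0$ term is precisely $\widehat{h_s}(0)=\int_{\mathbb R} z^s C(z)\,\mathrm d z$, the target value. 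It therefore remains only to show that every mode with $m\neq0$ vanishes, i.e. $\widehat{h_s}(2\pi m/\Delta z)=0$ for $m\neq0$ and $s=0,1,2$.

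The key structural input is that $C(z)=B_3(z/\Delta z)$ is the scaled cubic B-spline, the four-fold convolution of the centered unit box, so its Fourier transform is $\widehat{C}(\xi)=\Delta z\,\bigl(\mathrm{sinc}(\xi\Delta z/2)\bigr)^4$ with $\mathrm{sinc}(t)=\sin t/t$. At a dual frequency $\xi=2\pi m/\Delta z$ the argument is $\xi\Delta z/2=\pi m$, and since $\sin(\pi m)=0$ for $m\neq0$ while the denominator stays nonzero there, $\widehat C$ has a zero of order $4$. The weight $z^s$ corresponds to differentiation in frequency: differentiating under the integral sign gives $\widehat{h_s}(\xi)=\mathrm i^{\,s}\,\widehat{C}^{(s)}(\xi)$. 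A zero of order $4$ persists under up to three differentiations, so $\widehat C^{(s)}(2\pi m/\Delta z)=0$ for all $s\le 3$, in particular for $s=0,1,2$. Thus all $m\neq0$ modes vanish, $\Delta z\,P_s(r)\equiv\widehat{h_s}(0)$, and the proposition follows.

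The only genuinely load-bearing step is this order-of-vanishing argument: once $\widehat C$ is identified as $(\mathrm{sinc})^4$ up to scaling, the fact that the moment order $s\le 2$ stays strictly below the zero multiplicity $4$ is exactly what forces the discrete moments to reproduce the continuous ones without error. Everything else is routine, namely the validity of Poisson summation for the compactly supported $C^2$ function $h_s$ and the absolute convergence of the Fourier series (guaranteed by $\widehat{h_s}(\xi)=O(|\xi|^{-2})$ for $s\le2$). As a consistency check, for $s=0$ the identity reduces to the partition-of-unity property $\sum_{i} C(z_i+r)=1$.
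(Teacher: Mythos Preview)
Your proof is correct. The paper does not actually prove this proposition; it is stated with a citation to~\cite{udovicic2009calculation} (Udovicic, \emph{Calculation of the moments of the cardinal B-spline}), where such moment identities are obtained by direct computation from the piecewise-polynomial formula for $C$.

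Your route via Poisson summation is genuinely different and more structural. By recognizing the right-hand side as the periodization of $h_s(z)=z^sC(z)$ and using $\widehat C(\xi)=\Delta z\,\bigl(\mathrm{sinc}(\xi\Delta z/2)\bigr)^4$, you reduce the claim to the vanishing of $\widehat C^{(s)}$ at the nonzero dual-lattice points $2\pi m/\Delta z$, which is immediate from the fourth-order zero of $\mathrm{sinc}^4$ there. This is essentially the Strang--Fix viewpoint, and it explains \emph{why} the identity holds: the spline order strictly exceeds the moment order, so every nonconstant Fourier mode of the periodized moment density vanishes identically. Your argument in fact delivers the result for $s\le 3$, one order beyond what the paper uses, and makes transparent the link to partition of unity ($s=0$) and to polynomial reproduction, which are exactly what drive the subsequent Propositions~\ref{pro:secondpro}--\ref{pro:advecmomen}. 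A direct computation, by contrast, verifies each case by hand from the explicit pieces of $C$: elementary, but opaque and not obviously extensible to higher-order splines or higher moments.
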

Based the Proposition~\ref{pro:cubicmoment}, we have the following result about change of variables.
\begin{pro}\label{pro:secondpro}
$$
\sum_{i \in \mathbb{Z}} z_i^s\, C(z_i+r)\, \Delta z = \sum_{i \in \mathbb{Z}} (z_i-r)^s\, C(z_i)\, \Delta z, \quad \forall r \in \mathbb{R}, s = 0, 1, 2.
$$
\begin{proof}
We here focus on the case with $s=2$, the cases with $s=0,1$ can be proved similarly.
\begin{equation*}
    \begin{aligned}
&\sum_i z_i^2\, C(z_i+r)\, \Delta z = \sum_i (z_i+r-r)^2\, C(z_i+r) \Delta z\\
& = \sum_i (z_i+r)^2\, C(z_i+r)\, \Delta z + \sum_i r^2\, C(z_i+r)\, \Delta z - \sum_i 2r(z_i+r)\, C(z_i+r)\, \Delta z\\
& = \sum_i z_i^2\, C(z_i)\, \Delta z + \sum_i r^2\, C(z_i)\, \Delta z - \sum_i 2\,rz_i C(z_i)\, \Delta z\\
& = \sum_i (z_i-r)^2\, C(z_i) \Delta z,
 \end{aligned}
\end{equation*}
where the third equality holds because of the Proposition~\ref{pro:cubicmoment}.
\end{proof}
\end{pro}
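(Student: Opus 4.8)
The plan is to establish the identity for $s=2$ and to observe that the cases $s=0,1$ follow by the same, shorter argument. The guiding idea is that Proposition~\ref{pro:cubicmoment} asserts that the shifted moment sum $\sum_{i} (z_i+r)^t\, C(z_i+r)\, \Delta z$ equals the continuous moment $\int_{\mathbb R} z^t\, C(z)\,\mathrm d z$ for every $t \in \{0,1,2\}$ and every $r \in \mathbb{R}$; in particular this quantity is independent of $r$, so it coincides with its value at $r=0$, namely $\sum_i z_i^t\, C(z_i)\,\Delta z$. This single observation is what allows a shift to be transferred from the argument of $C$ onto the polynomial weight.

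First I would write $z_i = (z_i+r) - r$ and expand $z_i^2 = (z_i+r)^2 - 2r(z_i+r) + r^2$ inside the left-hand sum, splitting it into three sums whose weights are $C(z_i+r)$ multiplied by $(z_i+r)^2$, $(z_i+r)^1$, and $(z_i+r)^0$, respectively. Since each cubic B-spline $C(z_i+r)$ has compact support, every sum is finite and the splitting is legitimate with no convergence concern.

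Next I would apply Proposition~\ref{pro:cubicmoment} termwise: each of the three sums $\sum_i (z_i+r)^t\,C(z_i+r)\,\Delta z$ is replaced by $\sum_i z_i^t\, C(z_i)\,\Delta z$ using the $r$-independence described above. After this replacement the three sums all carry the weight $C(z_i)$, and I would recombine them by reversing the binomial identity $z_i^2 - 2r z_i + r^2 = (z_i - r)^2$, which yields $\sum_i (z_i-r)^2\,C(z_i)\,\Delta z$, exactly the right-hand side.

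The main obstacle is conceptual rather than computational: one must recognize that Proposition~\ref{pro:cubicmoment} is being used not as a quadrature identity but as a statement of $r$-invariance, which is precisely what permits moving the shift off the B-spline argument and onto the monomial weight. Once that is seen, the only remaining bookkeeping is to track the binomial coefficients so that the three transferred sums reassemble into $(z_i-r)^2$; the compact support of the cubic B-spline guarantees that all manipulations involve finite sums, so no interchange-of-limits issue arises.
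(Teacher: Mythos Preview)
Your proposal is correct and follows essentially the same approach as the paper: write $z_i^2=(z_i+r-r)^2$, expand into three sums weighted by $(z_i+r)^t\,C(z_i+r)$ for $t=0,1,2$, invoke Proposition~\ref{pro:cubicmoment} on each to replace the shift by its $r=0$ value, and then recombine into $(z_i-r)^2$. Your added remarks on compact support and on reading Proposition~\ref{pro:cubicmoment} as an $r$-invariance statement are fine clarifications but do not alter the argument.
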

\begin{pro}\label{pro:advecmomen} For the cubic spline based semi-Lagrangian scheme~\eqref{eq:cubicsemi} with an infinite domain $\mathbb{R}$ and uniform grids $\{z_i\}, i \in \mathbb{Z}$, we have
$$
\sum_{i \in \mathbb{Z}} f^{n+1}_i\, z_i^s = \sum_{i \in \mathbb{Z}} f_i^n\,(z_i + c\Delta t)^s, \quad s = 0, 1, 2.
$$
\end{pro}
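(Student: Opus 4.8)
The plan is to reduce the statement to the single algebraic identity
$$\sum_{i\in\mathbb Z} z_i^{s}\,\phi(z_i - c\Delta t) \;=\; \sum_{i\in\mathbb Z} (z_i + c\Delta t)^{s}\,\phi(z_i),\qquad s=0,1,2, \qquad (\star)$$
valid for every function of the spline form $\phi(z) = \sum_{j\in\mathbb Z} e_j\,C(z-z_j)$. Once $(\star)$ is available the proposition follows at once: taking $\phi = \mathcal I f^n$, the left-hand side equals $\sum_i z_i^s\,\mathcal I f^n(z_i-c\Delta t) = \sum_i z_i^s f^{n+1}_i$ by the update rule~\eqref{eq:cubicsemi}, while the right-hand side equals $\sum_i (z_i+c\Delta t)^s\,\mathcal I f^n(z_i) = \sum_i (z_i+c\Delta t)^s f^n_i$, because the cubic spline reconstruction interpolates the nodal data, $\mathcal I f^n(z_i)=f^n_i$.

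First I would record a polynomial form of Proposition~\ref{pro:secondpro}: since that identity holds for the monomials $z^0,z^1,z^2$, by linearity it holds for every polynomial $P$ with $\deg P\le 2$, namely $\sum_i P(z_i)\,C(z_i+r) = \sum_i P(z_i-r)\,C(z_i)$ for all $r\in\mathbb R$. This is the only analytic input required, and it is precisely the statement that the cubic spline reproduces moments up to order two exactly (cf. Proposition~\ref{pro:cubicmoment}).

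Next I would prove $(\star)$ by expanding $\phi$ and applying this extended identity termwise. Writing $\phi(z_i - c\Delta t) = \sum_j e_j\,C(z_i - c\Delta t - z_j)$ and interchanging the two sums, the left side of $(\star)$ becomes $\sum_j e_j \sum_i z_i^s\,C\big(z_i - (c\Delta t + z_j)\big)$; applying the polynomial form of Proposition~\ref{pro:secondpro} with $P(z)=z^s$ and $r=-(c\Delta t+z_j)$ turns the inner sum into $\sum_i (z_i + c\Delta t + z_j)^s\,C(z_i)$. For the right side, $\sum_i (z_i+c\Delta t)^s\,\phi(z_i) = \sum_j e_j \sum_i (z_i+c\Delta t)^s\,C(z_i-z_j)$; here the polynomial is $P(z)=(z+c\Delta t)^s$ of degree $s\le 2$, so the same identity with $r=-z_j$ gives $\sum_i (z_i + z_j + c\Delta t)^s\,C(z_i)$. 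The two inner expressions coincide, so summing against $e_j$ yields $(\star)$.

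The routine parts are the binomial bookkeeping hidden inside the phrase ``polynomial of degree $\le 2$'' and the interchange of summations, which is legitimate because $C$ has compact support (so each sum over $i$ is finite) and the data are assumed to decay. The one point that deserves care, and which I regard as the crux, is that the right-hand moment carries the \emph{shifted} polynomial $(z_i+c\Delta t)^s$ rather than a bare monomial: this is exactly why Proposition~\ref{pro:secondpro} must be invoked in its degree-$\le 2$ polynomial form, and why the restriction $s\le 2$ is essential, the cubic spline failing to reproduce moments of order three and higher.
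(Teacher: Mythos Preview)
Your proof is correct and follows essentially the same route as the paper: expand $\mathcal I f^n$ in the spline basis, interchange the sums over $i$ and $j$, and for each fixed $j$ invoke Proposition~\ref{pro:secondpro} to shift the argument of $C$ onto the evaluation point. The only cosmetic difference is that you apply Proposition~\ref{pro:secondpro} separately to each side of $(\star)$, reducing both to the common form $\sum_j e_j\sum_i (z_i+z_j+c\Delta t)^s C(z_i)$, whereas the paper does it in a single step, passing directly from $\sum_i z_i^s\,C(z_i-z_j-c\Delta t)$ to $\sum_i (z_i+c\Delta t)^s\,C(z_i-z_j)$; your explicit extension of Proposition~\ref{pro:secondpro} to polynomials of degree $\le 2$ makes the mechanism behind that step a bit more transparent.
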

\begin{proof}
For the cubic spline interpolation of the $(f^n_i)$, $\mathcal{I}f^n(z) = \sum_{j \in \mathbb{Z}} e_j N(z - z_j)$, we have $f_i^n = \mathcal{I}f^n(z_i)$, and $f_i^{n+1} = \mathcal{I}f^n(z_i - c\Delta t)$.
Then we have  
\begin{equation*}
\begin{aligned}
    \sum_{i \in \mathbb{Z}} f^{n+1}_i\, z_i^s &=  \sum_{i \in \mathbb{Z}}\sum_{j \in \mathbb{Z}} e_j\, C(z_i - z_j - c\Delta t)\, z_i^s= \sum_{i \in \mathbb{Z}}\sum_{j \in \mathbb{Z}} e_j\, C(z_i - z_j)\, (z_i+ c\Delta t)^s \\
    &= \sum_{i \in \mathbb{Z}} f_i^n\,(z_i + c\Delta t)^s,
\end{aligned}
\end{equation*}
where for each $j$, we have used the Proposition~\ref{pro:secondpro}.
\end{proof}
Next we give three identities of the scheme~\eqref{eq:cubicsemi}, which will be used in the following for proving the conservation properties of the schemes. 
\begin{lemma}\label{pro:semiidentity}
    For the cubic spline based semi-Lagrangian method~\eqref{eq:cubicsemi} of the equation~\eqref{eq:1dtrans} with an infinite domain $\mathbb{R}$ and uniform grids $\{z_i\}, i \in \mathbb{Z}$, we have the following identities
    \begin{equation*}
    \begin{aligned}
    &\sum_{i \in \mathbb{Z}} f^{n+1}_i = \sum_{i \in \mathbb{Z}} f^{n}_i,\quad \sum_{i \in \mathbb{Z}} z_i\,f^{n+1}_i = \sum_{i \in \mathbb{Z}} z_i\,f^{n}_i + c\,\Delta t \sum_{i \in \mathbb{Z}} f^n_i,\\
    &\frac{1}{2}\sum_{i \in \mathbb{Z}} z_i^2\,f^{n+1}_i = \frac{1}{2}\sum_{i \in \mathbb{Z}} z_i^2\,f^{n}_i + c\,\Delta t \left( \sum_{i \in \mathbb{Z}} z_i\,f^{n}_i + \frac{c\Delta t}{2}  \sum_{i \in \mathbb{Z}} f^{n}_i \right).
    \end{aligned}
    \end{equation*}
    \end{lemma}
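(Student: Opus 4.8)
The plan is to obtain all three identities as immediate specializations of Proposition~\ref{pro:advecmomen}, which already supplies the master relation
\[
\sum_{i \in \mathbb{Z}} f^{n+1}_i\, z_i^s = \sum_{i \in \mathbb{Z}} f_i^n\,(z_i + c\Delta t)^s, \qquad s = 0, 1, 2,
\]
for the scheme~\eqref{eq:cubicsemi}. All the genuine work—the interpolation identity together with the change-of-variables formula of Proposition~\ref{pro:secondpro} that makes the shifted moments exact—has been done there, so what remains is purely the binomial bookkeeping of expanding $(z_i + c\Delta t)^s$ and re-collecting the result in terms of the lower-order moments of $f^n$.

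First I would take $s=0$ in the master relation. Since $(z_i + c\Delta t)^0 = 1$, the right-hand side is just $\sum_i f^n_i$, which yields the mass identity $\sum_i f^{n+1}_i = \sum_i f^n_i$ directly. Next, for $s=1$ I would use $(z_i + c\Delta t)^1 = z_i + c\Delta t$ and split the sum, giving $\sum_i z_i\, f^{n+1}_i = \sum_i z_i\, f^n_i + c\,\Delta t \sum_i f^n_i$, which is the second (momentum) identity.

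Finally, for $s=2$ I would expand $(z_i + c\Delta t)^2 = z_i^2 + 2\,c\Delta t\, z_i + (c\Delta t)^2$, so that the master relation reads
\[
\sum_{i \in \mathbb{Z}} z_i^2\, f^{n+1}_i = \sum_{i \in \mathbb{Z}} z_i^2\, f^{n}_i + 2\,c\Delta t \sum_{i \in \mathbb{Z}} z_i\, f^{n}_i + (c\Delta t)^2 \sum_{i \in \mathbb{Z}} f^{n}_i.
\]
Multiplying through by $\tfrac12$ and factoring $c\,\Delta t$ out of the two remainder terms produces exactly the stated third (energy) identity. I do not anticipate any real obstacle here: the only point requiring minor care is the arithmetic of dividing by two and grouping the $\sum_i z_i f^n_i$ and $\tfrac{c\Delta t}{2}\sum_i f^n_i$ contributions into the single bracketed factor, and the fact that Proposition~\ref{pro:advecmomen} is stated over the infinite grid $\{z_i\}_{i\in\mathbb{Z}}$ so that no boundary terms intrude into the termwise splitting of the sums.
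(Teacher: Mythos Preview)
Your proposal is correct and follows essentially the same approach as the paper: the paper cites an external reference for the $s=0,1$ cases and then invokes Proposition~\ref{pro:advecmomen} with the binomial expansion of $(z_i+c\Delta t)^2$ for the $s=2$ case, which is exactly what you do uniformly for all three moments. If anything, your presentation is slightly cleaner in that it derives all three identities from the single master relation already established in Proposition~\ref{pro:advecmomen} rather than outsourcing the first two.
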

    \begin{proof}
    The first two equalities have already been proved in the Proposition 14 and 15 in~\cite{bookvlasov}. Here we focus on the third equality, which can be proved with Proposition~\ref{pro:advecmomen} and has been mentioned in~\cite{bookvlasov}.
    For the cubic spline interpolation of the $(f^n_i)$, $\mathcal{I}f^n(z) = \sum_{j} e_j C(z - z_j)$, we have $f_i^n = \mathcal{I}f^nf(z_i)$, and $f_i^{n+1} = \mathcal{I}f^n(z_i - c\Delta t)$. With the Proposition~\ref{pro:advecmomen}, 
    we have 
    \begin{equation*}
    \begin{aligned}
& \frac{1}{2}\sum_{i \in \mathbb{Z}} z_i^2\,f^{n+1}_i = \frac{1}{2}\sum_{i \in \mathbb{Z}} (z_i + c\Delta t) ^2 f^n_i \\
& = \frac{1}{2}\sum_{i \in \mathbb{Z}} z_i^2\, f_i^n + \frac{1}{2}\sum_{i \in \mathbb{Z}}  c^2\Delta t^2 f^n_i + c\,\Delta t  \sum_{i \in \mathbb{Z}} z_i\, f^n_i.
    \end{aligned}  
    \end{equation*}
    Thus we have finished the proof.
    \end{proof}

\begin{remark}\label{rm:truncate}
    In practical simulations, the infinite velocity domain is truncated to a sufficient large finite domain, such that the distribution functions are very small near the boundaries, which makes the Proposition~\ref{pro:advecmomen}-\ref{pro:semiidentity} hold with round-off errors for the first and second order moments~\cite{bookvlasov}.
\end{remark}

    \section{Sub-steps}\label{sec:iso}
Here, we present the discretization for sub-steps $pvb$ and $xv$. 
After full discretization, we have the following discrete mass $\mathcal{M}_h$, $\alpha$-th component of momentum $\mathcal{P}_{h,\alpha}$, and energy $\mathcal{E}_h$, the conservation of which will be considered for each sub-step,
\begin{equation}
\label{eq:discretequantities}
\begin{aligned}
\mathcal{M}_h &=  \sum_{{\boldsymbol i}{\boldsymbol j}}  f_{{\boldsymbol i}{\boldsymbol j}} \,\Delta {\boldsymbol x}\,\Delta {\boldsymbol v}, \quad 
 \mathcal{P}_{h,\alpha} = \sum_{{\boldsymbol i}{\boldsymbol j}}v^{\alpha}_{j_\alpha}f_{{\boldsymbol i}{\boldsymbol j}}\, \Delta {\boldsymbol x}\,\Delta {\boldsymbol v}, \\
\mathcal{E}_h &= \frac{1}{2}\sum_{{\boldsymbol i}{\boldsymbol j}} \left( |v^1_{j_1}|^2 + |v^2_{j_2}|^2 + |v^3_{j_3}|^2 \right)f_{\boldsymbol {ij}}\, \Delta {\boldsymbol x}\,\Delta {\boldsymbol v} \\
& + \frac{1}{2} \sum_{{\boldsymbol i}} \left({ B}_{1,{\boldsymbol i}}^2 +{ B}_{2,{\boldsymbol i}}^2 + { B}_{3,{\boldsymbol i}}^2 \right)\, \Delta {\boldsymbol x}  + \frac{1}{\gamma-1} \sum_{{\boldsymbol i}} p_{\boldsymbol i}\,  \Delta {\boldsymbol x} =: K_{f,h} + K_{B,h} + K_{p,h}.
\end{aligned}
\end{equation}
As mentioned in Remark~\ref{rm:truncate}, in this section we work with a periodic boundary condition in space and an infinite velocity domain, and in the practical simulations we have to truncate the velocity domain into a sufficiently large finite velocity domain, which makes the conservation properties of the momentum and energy we prove in this section hold with round off errors~\cite{bookvlasov}.

\subsection{Sub-step $pvb$}
The equation of this sub-step is 
\begin{equation}
\label{eq:discretization_isothermal_pbv}
\begin{aligned}
& \frac{\partial f}{\partial t} +({\boldsymbol E} + {\boldsymbol v} \times {\boldsymbol B}) \cdot \frac{\partial f}{\partial {\boldsymbol v}} = 0\,,\\
&\frac{\partial {\boldsymbol B}}{\partial t} = - \nabla \times {\boldsymbol E},\quad {\boldsymbol E} = -\frac{\nabla p}{\rho} - \left(\boldsymbol u - \frac{\nabla\times{\boldsymbol B}}{\rho}  \right) \times {\boldsymbol B},\\
&\frac{\partial p}{\partial t} + \nabla \cdot ({\boldsymbol u}_\mathrm{e}\, p) + (\gamma - 1)\, p\, \nabla \cdot {\boldsymbol u}_\mathrm{e} = 0, \quad {\boldsymbol u}_\mathrm{e} = {\boldsymbol u} - \frac{\nabla \times {\boldsymbol B}}{\rho}, \quad \gamma \neq 1.
\end{aligned}
\end{equation}

\noindent{\bf Time discretization.}
 The following modified mid-point rule is used for the time discretization between the time interval $[t^n, t^{n+1}]$ 
\begin{equation}\label{eq:bvubarf}
  \begin{aligned}
&\frac{\partial f}{\partial t} +\left({\boldsymbol v} - \bar{\boldsymbol u} + \frac{\nabla \times {\boldsymbol B}^{n+\frac{1}{2}}}{\rho}  \right) \times {\boldsymbol B}^{n+\frac{1}{2}} \cdot \frac{\partial f}{\partial {\boldsymbol v}}  - \frac{{\nabla p}^{n+\frac{1}{2}}}{\rho} \cdot \frac{\partial f}{\partial {\boldsymbol v}} = 0,\\
&\frac{{\boldsymbol B}^{n+1} - {\boldsymbol B}^n}{\Delta t} =  \nabla \times \left( \bar{\boldsymbol u} \times {\boldsymbol B}^{n+\frac{1}{2}}\right) - \nabla \times \left( \frac{\nabla \times {\boldsymbol B}^{n+\frac{1}{2}}}{\rho} \times {\boldsymbol B}^{n+\frac{1}{2}}\right) + \nabla \times \frac{{\nabla p}^{n+\frac{1}{2}}}{\rho},\\
&\frac{{p}^{n+1} - {p}^n}{\Delta t}  + \nabla \cdot \left(p^{n+\frac{1}{2}}\, \bar{\boldsymbol u}_e\right) + (\gamma - 1) p^{n+\frac{1}{2}}\, \nabla \cdot \bar{\boldsymbol u}_e  = 0, \quad \bar{\boldsymbol u}_e = \bar{\boldsymbol u} - \frac{\nabla \times {\boldsymbol B}^{n+\frac{1}{2}}}{\rho},
\end{aligned}
\end{equation}
in which the $f$ is updated with
as 
\begin{equation}\label{eq:semitraceback}
f(t^{n+1}, {\boldsymbol x}, {\boldsymbol v}) = f(t^n, {\boldsymbol x}, {\boldsymbol v}^n  ),
\end{equation}
where ${\boldsymbol v}^n$ is the solution of the following characteristics at time $t^{n}$ with ${\boldsymbol v}(t^{n+1}) = {\boldsymbol v}$,
 \begin{equation*}
\begin{aligned}
\dot{\boldsymbol v} & = \left({\boldsymbol v} -\bar{\boldsymbol u} + \frac{\nabla \times {\boldsymbol B}^{n+\frac{1}{2}}}{\rho} \right) \times {\boldsymbol B}^{n+\frac{1}{2}} - \frac{{\nabla p}^{n+\frac{1}{2}}}{\rho}.
\end{aligned}
\end{equation*}
 The density $\rho$ is not changing in time, so we omit the time index. The $\bar{\boldsymbol u}$ is fixed by the following condition, 
\begin{equation}\label{eq:u_int}
     \bar{\boldsymbol u} = \frac{1}{\Delta t} \int_{t^n}^{t^{n+1}}{\boldsymbol u}(t)\, \mathrm{d}{t},
 \end{equation}
where ${\boldsymbol u}(t)$ is the solution of equation 
\begin{equation}\label{eq:bvubarf3}
\frac{\partial {\boldsymbol u}}{\partial t} -\left({\boldsymbol u} - \bar{\boldsymbol u} + \frac{\nabla \times {\boldsymbol B}^{n+\frac{1}{2}}}{\rho}  \right) \times {\boldsymbol B}^{n+\frac{1}{2}} +\frac{{\nabla p}^{n+\frac{1}{2}}}{\rho} = 0,  \quad {\boldsymbol u}(t^n) = {\boldsymbol u}^n,
\end{equation}
which is obtained by the moment of the Vlasov equation in~\eqref{eq:bvubarf}. Thus $\bar{\boldsymbol u}$ is the time average of ${\boldsymbol u}(t)$ between time $[t^n, t^{n+1}]$. 

\noindent{\bf The explicit formula of $\bar{\boldsymbol u}$.}
By splitting $\frac{{\nabla p}^{n+\frac{1}{2}}}{\rho}$ into the parallel and perpendicular parts of the magnetic field, i.e., when ${\boldsymbol B}^{n+\frac{1}{2}}\neq 0$,
\begin{equation}\label{eq:pressuredecom}
\begin{aligned}
&\frac{{\nabla p}^{n+\frac{1}{2}}}{\rho} = {\boldsymbol q} \times {\boldsymbol B}^{n+\frac{1}{2}} + \left(\frac{{\nabla p}^{n+\frac{1}{2}}}{\rho}\right)_\parallel,\quad {\boldsymbol q} =\frac{ {\boldsymbol B}^{n+\frac{1}{2}}}{ |{\boldsymbol B}^{n+\frac{1}{2}}|^2} \times \frac{{\nabla p}^{n+\frac{1}{2}}}{\rho}, \\ &\left(\frac{{\nabla p}^{n+\frac{1}{2}}}{\rho}\right)_\parallel = \frac{{\nabla p}^{n+\frac{1}{2}} \cdot {\boldsymbol B}^{n+\frac{1}{2}}  }{\rho} \frac{{\boldsymbol B}^{n+\frac{1}{2}}}{|{\boldsymbol B}^{n+\frac{1}{2}}|^2}, 
\end{aligned}
\end{equation}
and when ${\boldsymbol B}^{n+\frac{1}{2}} = 0$, we set ${\boldsymbol q}=0$ and regard   $\frac{{\nabla p}^{n+\frac{1}{2}}}{\rho}$ parallel to ${\boldsymbol B}^{n+\frac{1}{2}}$, 
we have the solution of~\eqref{eq:bvubarf3} 
\begin{equation*}
    \begin{aligned}
        {\boldsymbol u}(t) &= e^{(t-t^n)\hat{\boldsymbol B}^{n+\frac{1}{2}}}\left({\boldsymbol u}^n - \bar{\boldsymbol u} + \frac{\nabla \times {\boldsymbol B}^{n+\frac{1}{2}}}{\rho}  - {\boldsymbol q}   \right)\\
        &+ \bar{\boldsymbol u}  - \frac{\nabla \times {\boldsymbol B}^{n+\frac{1}{2}}}{\rho} +  {\boldsymbol q} - (t-t^n)\left(\frac{{\nabla p}^{n+\frac{1}{2}}}{\rho}\right)_\parallel,
    \end{aligned}
\end{equation*}
where $\hat{\boldsymbol B} {\boldsymbol v} := {\boldsymbol v} \times {\boldsymbol B}$.
 With the explicit formula of $e^{t\hat{\boldsymbol B}^{n+\frac{1}{2}}}$~\cite{he2015volume} 
 $$
e^{t\hat{\boldsymbol B}^{n+\frac{1}{2}}} = \mathbb{I}_3 + \frac{\sin(t|{\boldsymbol B}^{n+\frac{1}{2}}|)}{|{\boldsymbol B}^{n+\frac{1}{2}}|}\hat{\boldsymbol B}^{n+\frac{1}{2}} + 2 \frac{\sin^2(\frac{t}{2}|{\boldsymbol B}^{n+\frac{1}{2}}|)}{|{\boldsymbol B}^{n+\frac{1}{2}}|^2}\hat{\boldsymbol B}^{n+\frac{1}{2}}\hat{\boldsymbol B}^{n+\frac{1}{2}} 
 $$
 and 
by directly calculating $ \int_{t^n}^{t^{n+1}}{\boldsymbol u}(t)\, \mathrm{d}{t}$, we have the following explicit formula of $\bar{\boldsymbol u}$,
\begin{equation}\label{eq:explicitbaru}
    \begin{aligned}
\bar{\boldsymbol u} &= {\boldsymbol u}^n +  \frac{\nabla \times  {\boldsymbol B}^{n+\frac{1}{2}}}{\rho} -  {\boldsymbol q} + {\boldsymbol M}^{-1} \left( -\frac{\nabla \times  {\boldsymbol B}^{n+\frac{1}{2}}}{\rho}+  {\boldsymbol q} - \frac{\Delta t}{2} \left(\frac{{\nabla p}^{n+\frac{1}{2}}}{\rho}\right)_\parallel  \right),
 \end{aligned}
\end{equation}
where ${\boldsymbol M} = \frac{1}{\Delta t}\int_{0}^{\Delta t}e^{\tau\hat{\boldsymbol B}^{n+\frac{1}{2}}} \mathrm{d}{\tau} $. 
\begin{pro}
    The matrix ${\boldsymbol M}$ is invertible if $\Delta t |{\boldsymbol B}^{n+\frac{1}{2}}| \neq 2k\pi, k \in \mathbb{Z}, k \ge 1$.
\end{pro}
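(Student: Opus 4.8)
The plan is to diagonalize the cross-product matrix $\hat{\boldsymbol B}^{n+\frac12}$ and read off the spectrum of $\boldsymbol M$ directly, exploiting that $\boldsymbol M$ is a matrix function of $\hat{\boldsymbol B}^{n+\frac12}$ obtained by integrating its exponential. Write $b := |{\boldsymbol B}^{n+\frac12}|$. The first step records that $\hat{\boldsymbol B}^{n+\frac12}$ is a real skew-symmetric matrix: since $\hat{\boldsymbol B}\boldsymbol v = \boldsymbol v\times\boldsymbol B$, the vector ${\boldsymbol B}^{n+\frac12}$ lies in its kernel, and a standard computation gives its eigenvalues as $0$ and $\pm\mathrm{i}\,b$; being normal, it is unitarily diagonalizable over $\mathbb{C}$. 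When $b=0$ one has $\hat{\boldsymbol B}^{n+\frac12}=0$, hence $\boldsymbol M=\mathbb{I}_3$, which is trivially invertible; so I treat $b\neq 0$ in what follows.

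The second step transports this diagonalization through the integral. Because $e^{\tau\hat{\boldsymbol B}^{n+\frac12}}$ is a power series in $\hat{\boldsymbol B}^{n+\frac12}$, it is diagonalized by the same $\tau$-independent eigenbasis, with eigenvalues $e^{\tau\mu}$ for each $\mu\in\{0,\pm\mathrm{i}\,b\}$. Consequently the integral $\boldsymbol M = \frac1{\Delta t}\int_0^{\Delta t}e^{\tau\hat{\boldsymbol B}^{n+\frac12}}\,\mathrm{d}\tau$ is diagonalized by that same basis, and its eigenvalues follow by integrating eigenvalue-by-eigenvalue,
\begin{equation*}
\mu=0:\ \ \frac{1}{\Delta t}\int_0^{\Delta t}1\,\mathrm{d}\tau = 1,\qquad
\mu=\pm\mathrm{i}\,b:\ \ \frac{1}{\Delta t}\int_0^{\Delta t}e^{\pm\mathrm{i}\,b\tau}\,\mathrm{d}\tau = \frac{e^{\pm\mathrm{i}\,b\,\Delta t}-1}{\pm\mathrm{i}\,b\,\Delta t}.
\end{equation*}

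The third step concludes. Since $\boldsymbol M$ is diagonalizable, $\det\boldsymbol M$ equals the product of its three eigenvalues, so $\boldsymbol M$ is invertible exactly when none of them vanishes. The eigenvalue from $\mu=0$ is always $1$, and the conjugate pair from $\mu=\pm\mathrm{i}\,b$ vanishes if and only if $e^{\pm\mathrm{i}\,b\,\Delta t}=1$, i.e. $b\,\Delta t\in 2\pi\mathbb{Z}$. As $b\,\Delta t>0$ in the case $b\neq 0$, this happens precisely when $\Delta t\,|{\boldsymbol B}^{n+\frac12}| = 2k\pi$ for some integer $k\ge 1$, which is the excluded set; hence $\boldsymbol M$ is invertible otherwise.

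There is no genuinely hard step here; the only points demanding care are the degenerate case $b=0$, handled separately above, and the justification that the eigenbasis of $\hat{\boldsymbol B}^{n+\frac12}$ simultaneously diagonalizes the $\tau$-integral, which rests on the normality of skew-symmetric matrices together with the $\tau$-independence of that basis. A purely real alternative avoids complex eigenvalues: integrate the Rodrigues formula for $e^{\tau\hat{\boldsymbol B}^{n+\frac12}}$ term by term to obtain $\boldsymbol M = \mathbb{I}_3 + a_1\hat{\boldsymbol B}^{n+\frac12} + a_2\bigl(\hat{\boldsymbol B}^{n+\frac12}\bigr)^2$ with explicit scalar coefficients $a_1,a_2$ depending on $b\,\Delta t$, and then evaluate $\det\boldsymbol M$ via the eigenvalues $0,\pm\mathrm{i}\,b$ of $\hat{\boldsymbol B}^{n+\frac12}$; this reproduces the same condition.
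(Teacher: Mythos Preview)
Your proof is correct and follows essentially the same approach as the paper: both compute the eigenvalues of $\hat{\boldsymbol B}^{n+\frac12}$ as $0,\pm\mathrm{i}\,|\boldsymbol B^{n+\frac12}|$, transfer them to $\boldsymbol M$ via the functional calculus, and check when the resulting eigenvalues are nonzero. Your treatment is in fact slightly more careful than the paper's, since you handle the degenerate case $b=0$ explicitly and write the nontrivial eigenvalues in the cleaner form $(e^{\pm\mathrm{i}\theta}-1)/(\pm\mathrm{i}\theta)$, from which the vanishing condition $\theta\in 2\pi\mathbb{Z}_{\ge 1}$ is immediate.
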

\begin{proof}
We denote the eigenvalue of $\hat{\boldsymbol B}^{n+\frac{1}{2}}$ as $\lambda$, which is $0$, $\mathrm{i}|{{\boldsymbol B}^{n+\frac{1}{2}}}|$ or $-\mathrm{i}|{{\boldsymbol B}^{n+\frac{1}{2}}}|$.
Then the eigenvalue of ${\boldsymbol M}$ is 1 or 
$
\frac{\sin \theta}{\theta} \pm 2 \sin^2\left(\frac{\theta}{2}\right)
$, where $\theta = \Delta t |{\boldsymbol B}^{n+\frac{1}{2}}|$. Then we know that when $\theta \neq 2k\pi, k \in \mathbb{Z}, k \ge 1$, all the eigenvalues of matrix ${\boldsymbol M}$ are non-zero, and matrix ${\boldsymbol M}$ is invertible. 
\end{proof}
%which is invertible and has ${\boldsymbol B}^{n+\frac{1}{2}}$ as eigenvector corresponding to eigenvalue 1.

As the time integral average approximates the mid-point value with the error of $\mathcal{O}(\Delta t^2)$,  by comparing it with the standard mid-point rule, we know that the modified mid-point rule~\eqref{eq:bvubarf} is second order in time.

\noindent{\bf Reversibility.} Then we consider the reversibility~\cite{HLW} of the modified mid-point rule~\eqref{eq:bvubarf}. 
\begin{theorem}\label{thm:reversible1}
    The scheme~\eqref{eq:bvubarf} is reversible.
\end{theorem}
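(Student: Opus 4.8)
The plan is to use the standard characterization of reversibility (symmetry) from~\cite{HLW}: writing the one-step map as $\Phi_{\Delta t}:(f^n,{\boldsymbol B}^n,p^n)\mapsto(f^{n+1},{\boldsymbol B}^{n+1},p^{n+1})$, it suffices to show $\Phi_{-\Delta t}\circ\Phi_{\Delta t}=\mathrm{id}$, equivalently that the defining relations~\eqref{eq:bvubarf}--\eqref{eq:bvubarf3} are invariant under the substitution $n\leftrightarrow n+1$, $\Delta t\to-\Delta t$. First I would record the easy invariances: under this substitution the midpoints ${\boldsymbol B}^{n+\frac12}=\tfrac12({\boldsymbol B}^n+{\boldsymbol B}^{n+1})$ and $p^{n+\frac12}$ are unchanged, the density $\rho$ is unchanged (it is frozen during $pvb$), and the difference quotients $({\boldsymbol B}^{n+1}-{\boldsymbol B}^n)/\Delta t$ and $(p^{n+1}-p^n)/\Delta t$ are unchanged. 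Consequently the field equations (second and third lines of~\eqref{eq:bvubarf}) are manifestly symmetric \emph{provided} the mean velocity $\bar{\boldsymbol u}$ is the same whether computed in the forward step (from ${\boldsymbol u}^n$) or in the backward step (from ${\boldsymbol u}^{n+1}$). This invariance of $\bar{\boldsymbol u}$ is the main obstacle.

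To establish it I would argue in two steps. First, $\bar{\boldsymbol u}$ defined by~\eqref{eq:u_int} is the time average over $[t^n,t^{n+1}]$ of the trajectory ${\boldsymbol u}(t)$ solving~\eqref{eq:bvubarf3}; such an average is a property of the trajectory segment itself and is insensitive to the direction of traversal. It therefore suffices to show that the forward trajectory (fixed by ${\boldsymbol u}(t^n)={\boldsymbol u}^n$) and the trajectory used by the backward step (fixed by its value at $t^{n+1}$ being ${\boldsymbol u}^{n+1}$, the moment of $f^{n+1}$) coincide. Second, I would prove the moment-consistency identity ${\boldsymbol u}(t^{n+1})={\boldsymbol u}^{n+1}$, i.e.\ that the first velocity moment of the updated distribution function equals the endpoint of the ${\boldsymbol u}$-trajectory. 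This holds because the characteristic ODE for ${\boldsymbol v}$ below~\eqref{eq:semitraceback} is identical in form to~\eqref{eq:bvubarf3}, and its velocity flow $\Psi_{\boldsymbol x}$ is a rotation about ${\boldsymbol B}^{n+\frac12}$ composed with a translation, hence affine and divergence-free in ${\boldsymbol v}$; taking the first moment of $f^{n+1}=f^n\circ\Psi_{\boldsymbol x}^{-1}$ and changing variables gives $\rho\,{\boldsymbol u}^{n+1}=\rho\,\Psi_{\boldsymbol x}({\boldsymbol u}^n)$, so ${\boldsymbol u}^{n+1}=\Psi_{\boldsymbol x}({\boldsymbol u}^n)={\boldsymbol u}(t^{n+1})$. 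At the fully discrete level this is exactly Proposition~\ref{pro:advecmomen} and Lemma~\ref{pro:semiidentity} applied to the one-dimensional advections of the exact splitting. A convenient supporting fact, obtained by integrating~\eqref{eq:bvubarf3} and using $\int_{t^n}^{t^{n+1}}({\boldsymbol u}(t)-\bar{\boldsymbol u})\,\mathrm{d}t=0$, is the symmetric relation ${\boldsymbol u}^{n+1}-{\boldsymbol u}^n=\Delta t\,(\frac{\nabla\times{\boldsymbol B}^{n+\frac12}}{\rho}\times{\boldsymbol B}^{n+\frac12}-\frac{\nabla p^{n+\frac12}}{\rho})$, whose two sides both change sign under $n\leftrightarrow n+1$, $\Delta t\to-\Delta t$. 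With moment consistency in hand, the forward pair $({\boldsymbol u}(\cdot),\bar{\boldsymbol u})$ solves the backward fixed-point problem as well, and uniqueness of that fixed point—guaranteed by the invertibility of ${\boldsymbol M}$ underlying the explicit formula~\eqref{eq:explicitbaru}—forces $\bar{\boldsymbol u}$ to be direction-independent.

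Once ${\boldsymbol B}^{n+\frac12}$, $p^{n+\frac12}$, $\rho$ and $\bar{\boldsymbol u}$ are all shown invariant, the field update is symmetric under the substitution, so the (uniquely solvable) implicit backward solve returns $\tilde{\boldsymbol B}={\boldsymbol B}^n$ and $\tilde p=p^n$. For the distribution function, the update~\eqref{eq:semitraceback} is $f^{n+1}=f^n\circ\Psi_{\boldsymbol x}^{-1}$ with $\Psi_{\boldsymbol x}$ the autonomous (frozen-coefficient) velocity flow; running the same characteristic ODE with $-\Delta t$ produces precisely the inverse flow $\Psi_{\boldsymbol x}^{-1}$, so the backward step yields $\tilde f=f^{n+1}\circ\Psi_{\boldsymbol x}=f^n$. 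Collecting these three facts gives $\Phi_{-\Delta t}\circ\Phi_{\Delta t}=\mathrm{id}$, proving the scheme reversible. I expect the genuine work to lie entirely in the $\bar{\boldsymbol u}$ argument—tying the self-consistent average to the trajectory through moment consistency and invoking the uniqueness from invertibility of ${\boldsymbol M}$—while the field and $f$ pieces reduce to inspection of symmetry and invertibility of the flow.
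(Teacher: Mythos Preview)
Your proposal is correct and follows essentially the same route as the paper: both arguments reduce reversibility to showing that $\bar{\boldsymbol u}$ is invariant under $n\leftrightarrow n+1$, $\Delta t\to-\Delta t$, and both obtain this from the reversibility of the ${\boldsymbol u}$-equation~\eqref{eq:bvubarf3}, with your version making the needed moment-consistency ${\boldsymbol u}(t^{n+1})={\boldsymbol u}^{n+1}$ and the uniqueness (via invertibility of ${\boldsymbol M}$) explicit where the paper leaves them implicit. One caveat: drop the aside about Proposition~\ref{pro:advecmomen} and Lemma~\ref{pro:semiidentity}, since Theorem~\ref{thm:reversible1} concerns only the time semi-discretization~\eqref{eq:bvubarf}, and the fully discrete scheme~\eqref{eq:pvbfully} is in fact \emph{not} reversible (see the remark following Theorem~\ref{thm:vv}).
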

\begin{proof}
    By the definition of reversible schemes, we need to prove when we exchange the unknowns at $t^n$ with $t^{n+1}$, and set $\Delta t = -\Delta t$, the same scheme as~\eqref{eq:bvubarf} is obtained. 

    For the scheme of the magnetic field and pressure in~\eqref{eq:bvubarf}, we can see we only need to prove $\bar{\boldsymbol u}$ is not changed when we exchange ${\boldsymbol u}^n$ with ${\boldsymbol u}^{n+1}$ and set $\Delta t = -\Delta t$, which is true due to the equation~\eqref{eq:bvubarf3} satisfied by ${\boldsymbol u}$ is reversible. Also the scheme of $f$ is reversible, due to reversibility of the Vlasov equation in~\eqref{eq:bvubarf}.
\end{proof}

\begin{theorem}
When we assume a periodic boundary condition in space, and an infinite velocity domain,
    mass, momentum, and energy are conserved by the time discretization~\eqref{eq:bvubarf}. Also the cancellation problem~\eqref{eq:cancellation} is overcome.
\end{theorem}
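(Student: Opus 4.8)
The plan is to prove the three conservation laws one at a time and read off the discrete cancellation identity as a byproduct. The common engine is that over a step $[t^n,t^{n+1}]$ the quantities ${\boldsymbol B}^{n+\frac12}$, $\nabla p^{n+\frac12}$ and $\bar{\boldsymbol u}$ are frozen, so the Vlasov equation in~\eqref{eq:bvubarf} is a pure advection in ${\boldsymbol v}$ along the affine characteristic field ${\boldsymbol a}({\boldsymbol v})=({\boldsymbol v}-\bar{\boldsymbol u}+\frac{\nabla\times{\boldsymbol B}^{n+\frac12}}{\rho})\times{\boldsymbol B}^{n+\frac12}-\frac{\nabla p^{n+\frac12}}{\rho}$. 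Since $\nabla_v\cdot{\boldsymbol a}=\nabla_v\cdot({\boldsymbol v}\times{\boldsymbol B}^{n+\frac12})=0$, this advection is volume preserving, and I would first use this to compute \emph{exactly} the evolution of the moments $\int f\,\mathrm d{\boldsymbol v}$, $\int{\boldsymbol v}f\,\mathrm d{\boldsymbol v}$, $\frac12\int|{\boldsymbol v}|^2 f\,\mathrm d{\boldsymbol v}$ by testing the Vlasov equation against $1,{\boldsymbol v},\frac12|{\boldsymbol v}|^2$ and integrating by parts in ${\boldsymbol v}$. At the fully discrete level these exact moment relations are precisely Lemma~\ref{pro:semiidentity} (and Proposition~\ref{pro:advecmomen}), so the argument transfers verbatim. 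The decisive structural fact I would isolate at the outset is the defining property~\eqref{eq:u_int} of $\bar{\boldsymbol u}$, namely $\int_{t^n}^{t^{n+1}}{\boldsymbol u}(t)\,\mathrm dt=\Delta t\,\bar{\boldsymbol u}$, equivalently $\int_{t^n}^{t^{n+1}}{\boldsymbol J}_f(t)\,\mathrm dt=\Delta t\,\rho\bar{\boldsymbol u}$ (as $\rho$ is time independent); this is what makes every spurious $(\cdots)\times{\boldsymbol B}^{n+\frac12}$ contribution drop out.

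\textbf{Mass and momentum.} Volume preservation of the ${\boldsymbol v}$-advection gives $\rho^{n+1}=\rho^n=\rho$ pointwise, so $\mathcal M_h$ is conserved (this is the zeroth identity of Lemma~\ref{pro:semiidentity}). For momentum, the first ${\boldsymbol v}$-moment of the Vlasov equation, after integration by parts, is exactly the ${\boldsymbol u}$-equation~\eqref{eq:bvubarf3}: $\partial_t{\boldsymbol J}_f=\rho({\boldsymbol u}-\bar{\boldsymbol u})\times{\boldsymbol B}^{n+\frac12}+{\boldsymbol J}^{n+\frac12}\times{\boldsymbol B}^{n+\frac12}-\nabla p^{n+\frac12}$. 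Integrating over $[t^n,t^{n+1}]$ and using $\int{\boldsymbol u}\,\mathrm dt=\Delta t\,\bar{\boldsymbol u}$ kills the first term, leaving the discrete identity
\[
{\boldsymbol J}_f^{n+1}-{\boldsymbol J}_f^n=\Delta t\left({\boldsymbol J}^{n+\frac12}\times{\boldsymbol B}^{n+\frac12}-\nabla p^{n+\frac12}\right),
\]
which is the exact discrete analogue of~\eqref{eq:cancellation}; the ${\boldsymbol v}\times{\boldsymbol B}$ and $-\bar{\boldsymbol u}\times{\boldsymbol B}$ pieces cancel, so the cancellation problem is overcome. Integrating over the periodic box, $\int\nabla p^{n+\frac12}\,\mathrm d{\boldsymbol x}=0$ and $\int{\boldsymbol J}^{n+\frac12}\times{\boldsymbol B}^{n+\frac12}\,\mathrm d{\boldsymbol x}=0$ (the latter from $({\boldsymbol J}\times{\boldsymbol B})=({\boldsymbol B}\cdot\nabla){\boldsymbol B}-\tfrac12\nabla|{\boldsymbol B}|^2$ together with $\nabla\cdot{\boldsymbol B}^{n+\frac12}=0$, which the curl-form update preserves), hence $\mathcal P_{h,\alpha}$ is conserved.

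\textbf{Energy.} This is the main obstacle, because $K_f$ evolves continuously in $t$ while the fields are frozen, so I must integrate $\dot K_f$ over the step and match it against the centred increments of $K_B$ and $K_p$. The $\frac12|{\boldsymbol v}|^2$-moment gives a $\dot K_f$-density ${\boldsymbol J}_f\cdot[(-\bar{\boldsymbol u}+\frac{\nabla\times{\boldsymbol B}^{n+\frac12}}{\rho})\times{\boldsymbol B}^{n+\frac12}]-\frac1\rho{\boldsymbol J}_f\cdot\nabla p^{n+\frac12}$ (the ${\boldsymbol v}\times{\boldsymbol B}$ self-term vanishes since ${\boldsymbol v}\cdot({\boldsymbol v}\times{\boldsymbol B})=0$); integrating in $t$ and again using $\int{\boldsymbol J}_f\,\mathrm dt=\Delta t\,\rho\bar{\boldsymbol u}$ kills the $-\bar{\boldsymbol u}\times{\boldsymbol B}$ term and yields $\Delta K_f=\Delta t\int\bar{\boldsymbol u}\cdot[{\boldsymbol J}^{n+\frac12}\times{\boldsymbol B}^{n+\frac12}-\nabla p^{n+\frac12}]\,\mathrm d{\boldsymbol x}$. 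For the magnetic part I would write the update as ${\boldsymbol B}^{n+1}-{\boldsymbol B}^n=-\Delta t\,\nabla\times{\boldsymbol E}^{n+\frac12}$ with ${\boldsymbol E}^{n+\frac12}$ the mid-point Ohm's law, use $\Delta K_B=\int({\boldsymbol B}^{n+1}-{\boldsymbol B}^n)\cdot{\boldsymbol B}^{n+\frac12}\,\mathrm d{\boldsymbol x}$ and integrate by parts to get $\Delta K_B=-\Delta t\int{\boldsymbol E}^{n+\frac12}\cdot{\boldsymbol J}^{n+\frac12}\,\mathrm d{\boldsymbol x}$; the $\frac{\nabla\times{\boldsymbol B}}{\rho}\times{\boldsymbol B}$ piece of ${\boldsymbol E}^{n+\frac12}$ is orthogonal to ${\boldsymbol J}^{n+\frac12}$ and drops, giving $\Delta K_B=\Delta t\int\frac{\nabla p^{n+\frac12}}{\rho}\cdot{\boldsymbol J}^{n+\frac12}\,\mathrm d{\boldsymbol x}-\Delta t\int\bar{\boldsymbol u}\cdot({\boldsymbol J}^{n+\frac12}\times{\boldsymbol B}^{n+\frac12})\,\mathrm d{\boldsymbol x}$. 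Adding, the two $\bar{\boldsymbol u}\cdot({\boldsymbol J}\times{\boldsymbol B})$ terms cancel and, with $\bar{\boldsymbol u}_e=\bar{\boldsymbol u}-\frac{\nabla\times{\boldsymbol B}^{n+\frac12}}{\rho}$, one finds $\Delta K_f+\Delta K_B=-\Delta t\int\bar{\boldsymbol u}_e\cdot\nabla p^{n+\frac12}\,\mathrm d{\boldsymbol x}$. Finally, integrating the pressure update over the box annihilates the flux term $\nabla\cdot(p^{n+\frac12}\bar{\boldsymbol u}_e)$, and one integration by parts gives $\Delta K_p=\frac{1}{\gamma-1}\int(p^{n+1}-p^n)\,\mathrm d{\boldsymbol x}=\Delta t\int\bar{\boldsymbol u}_e\cdot\nabla p^{n+\frac12}\,\mathrm d{\boldsymbol x}$, which exactly cancels $\Delta K_f+\Delta K_B$, so $\mathcal E_h$ is conserved.

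The hardest and most delicate step is this energy balance: the continuously evolving $K_f$ must be reconciled with the mid-point increments of $K_B$ and $K_p$, and the whole argument hinges on (i) replacing $\int_{t^n}^{t^{n+1}}{\boldsymbol J}_f(t)\,\mathrm dt$ by $\Delta t\,\rho\bar{\boldsymbol u}$, the defining property of $\bar{\boldsymbol u}$, and (ii) the orthogonality relations ${\boldsymbol v}\cdot({\boldsymbol v}\times{\boldsymbol B})=0$, $({\boldsymbol J}\times{\boldsymbol B})\cdot{\boldsymbol J}=0$, and $\bar{\boldsymbol u}\cdot(\bar{\boldsymbol u}\times{\boldsymbol B})=0$, which force every non-pressure-gradient term to cancel in pairs across $\Delta K_f$, $\Delta K_B$, $\Delta K_p$. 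A secondary technical point is verifying that the moment computations, which I carry out at the continuous level in ${\boldsymbol x}$ and ${\boldsymbol v}$, carry over to the fully discrete scheme: this is guaranteed by the exactness of the one-dimensional advections and the moment identities of Lemma~\ref{pro:semiidentity}, while the spatial integrations by parts and the vanishing of zero-mode gradients hold exactly for the Fourier discretization.
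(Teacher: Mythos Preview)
Your proof is correct and follows essentially the same approach as the paper: take the $1$, ${\boldsymbol v}$, and $\tfrac12|{\boldsymbol v}|^2$ moments of the frozen-coefficient Vlasov equation, integrate over $[t^n,t^{n+1}]$, and use the defining property~\eqref{eq:u_int} to replace every $\int_{t^n}^{t^{n+1}}{\boldsymbol J}_f\,\mathrm dt$ by $\Delta t\,\rho\bar{\boldsymbol u}$, then match against the mid-point increments of $K_B$ and $K_p$ via integration by parts. The only difference is organizational---you evaluate $\int{\boldsymbol J}_f\,\mathrm dt$ immediately and collapse $\Delta K_f+\Delta K_B$ into $-\Delta t\int\bar{\boldsymbol u}_e\cdot\nabla p^{n+\frac12}$, whereas the paper labels the individual terms $a,b,c$ in $\Delta K_f$, $\Delta K_B$, $\Delta K_p$ and cancels them pairwise---but the substance is identical; note also that your closing remarks about Lemma~\ref{pro:semiidentity} and the fully discrete scheme pertain to Theorem~\ref{thm:vv} rather than to the present semi-discrete statement.
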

\begin{proof}
    The mass conservation is easy to prove by directly integrating the Vlasov equation in~\eqref{eq:bvubarf} in phase-space with the periodic boundary condition.

    Multiplying ${\boldsymbol v}$ on both sides of the Vlasov equation in~\eqref{eq:bvubarf} and then integrating about ${\boldsymbol v}$ and $t$ gives 
    \begin{equation}
    \label{eq:jfsemi}
\begin{aligned}
      {\boldsymbol J}_f^{n+1} - {\boldsymbol J}_f^{n} &=   \int_{t^n}^{t^{n+1}} {\boldsymbol J}_f(t) \times {\boldsymbol B}^{n+\frac{1}{2}}\,\mathrm{d}{t} - \Delta t\, \rho \bar{\boldsymbol u}\times {\boldsymbol B}^{n+\frac{1}{2}} \\
      & + \Delta t\, {\nabla \times  {\boldsymbol B}^{n+\frac{1}{2}}} \times {\boldsymbol B}^{n+\frac{1}{2}} - \Delta t\, \nabla p^{n+\frac{1}{2}},
    \end{aligned}
    \end{equation}
    where the first two terms cancel out due to the condition~\eqref{eq:u_int}, and thus the cancellation problem~\eqref{eq:cancellation} is overcome. By integrating the above identity~\eqref{eq:jfsemi} about ${\boldsymbol{ x}}$, we get 
    \begin{equation}
        \begin{aligned}
    \mathcal{P}^{n+1} - \mathcal{P}^n &= \int  \Delta t\, {\nabla \times  {\boldsymbol B}^{n+\frac{1}{2}}} \times {\boldsymbol B}^{n+\frac{1}{2}} - \Delta t\, \nabla p^{n+\frac{1}{2}}\, \mathrm{d}{\boldsymbol x}\\
    &  =\int \Delta t\, \nabla \cdot \left({\boldsymbol B}^{n+\frac{1}{2}}{\boldsymbol B}^{n+\frac{1}{2}} - \frac{|{\boldsymbol B}^{n+\frac{1}{2}}|^2}{2}\mathbb{I}_3\right) - \Delta t\, \nabla p^{n+\frac{1}{2}}\, \mathrm{d}{\boldsymbol x} = 0,
    \end{aligned}
    \end{equation}
    where in the second last equality we have used the the condition $\nabla \cdot {\boldsymbol B}^{n+\frac{1}{2}} = 0$. Then we have proved the momentum conservation.

    Multiplying $\frac{|{\boldsymbol v}|^2}{2}$ on both sides of the Vlasov equation in~\eqref{eq:bvubarf} and then integrating in the phase-space and about $t$ gives
    \begin{equation}
    \label{eq:kfen}
        \begin{aligned}
K_f^{n+1} - K_f^n &= \int - \bar{\boldsymbol u} \times {\boldsymbol B}^{n+\frac{1}{2}} \cdot {\boldsymbol J}_f\,\mathrm{d}{\boldsymbol x}\, \mathrm{d}t \underbrace{- \int \frac{\nabla {p}^{n+\frac{1}{2}}}{\rho} \cdot {\boldsymbol J}_f\, \mathrm{d}{\boldsymbol x}\, \mathrm{d}t}_{a}\\
&+ \underbrace{\int \frac{\nabla \times {\boldsymbol B}^{n+\frac{1}{2}}}{\rho} \times {\boldsymbol B}^{n+\frac{1}{2}} \cdot {\boldsymbol J}_f\, \mathrm{d}{\boldsymbol x}\, \mathrm{d}t}_{b},
     \end{aligned}
    \end{equation}
    where the first term vanishes due to the condition~\eqref{eq:u_int}.
Multiplying ${\boldsymbol B}^{n+\frac{1}{2}}$ on both sides of the equation of magnetic field in~\eqref{eq:bvubarf} and then integrating about ${\boldsymbol x}$ gives
\begin{equation*}
\begin{aligned}
K_B^{n+1} - K_B^n & = - \Delta t \int {\boldsymbol B}^{n+\frac{1}{2}} \cdot  \nabla \times \left( \frac{\nabla \times {\boldsymbol B}^{n+\frac{1}{2}}}{\rho} \times {\boldsymbol B}^{n+\frac{1}{2}}\right) \mathrm{d}{\boldsymbol x}\\ &
+ \underbrace{\Delta t \int {\boldsymbol B}^{n+\frac{1}{2}} \cdot   \nabla \times \left( \bar{\boldsymbol u} \times {\boldsymbol B}^{n+\frac{1}{2}}\right) \mathrm{d}{\boldsymbol x}}_{b} + \underbrace{\Delta t\int {\boldsymbol B}^{n+\frac{1}{2}} \cdot  \nabla \times \frac{{\nabla p}^{n+\frac{1}{2}}}{\rho} \mathrm{d}{\boldsymbol x}}_{c},
\end{aligned}
\end{equation*}
where the first term vanishes via integration by parts.
Integrating the pressure equation in~\eqref{eq:bvubarf} about ${\boldsymbol x}$ gives
\begin{equation*}
    \begin{aligned}
K_p^{n+1} - K_p^n &= \frac{-\Delta t}{\gamma -1} \int \nabla \cdot (p^{n+\frac{1}{2}}\, \bar{\boldsymbol u}_e) \,\mathrm{d}{\boldsymbol x} \underbrace{- \Delta t \int p^{n+\frac{1}{2}} \nabla \cdot \bar{\boldsymbol u}\, \mathrm{d}{\boldsymbol x}}_{a}\\
&+ \underbrace{\Delta t \int p^{n+\frac{1}{2}} \nabla \cdot \frac{\nabla \times {\boldsymbol B}^{n+\frac{1}{2}}}{\rho}\, \mathrm{d}{\boldsymbol x}}_{c},
 \end{aligned}
\end{equation*}
where the first term vanishes via integration by parts.  The sum of the remaining terms with the same label cancel out via integration by parts and the condition~\eqref{eq:u_int}, and the energy conservation is proved. 
\end{proof}

\noindent{\bf Exact splitting.}
For the resolution of the distribution function, if we directly use semi-Lagrangian method~\cite{semi1} to solve~\eqref{eq:semitraceback}, costly three dimensional interpolations are needed.  According to the theory in~\cite{exact2}, exact splitting that avoids high dimensional interpolations can be constructed for~\eqref{eq:semitraceback}. In the following, we give an explicit exact splitting for~\eqref{eq:semitraceback} with the help of the decomposition of the pressure term~\eqref{eq:pressuredecom} and the exact splitting for rotations~\cite{exact1,exact2}. With the exact splitting presented below, only one dimensional advections and interpolations are needed, and are thus efficient.

We firstly have a closer look at the characteristics,
\begin{equation}
\label{eq:characteristics}
\begin{aligned}
\dot{\boldsymbol v} & = \left({\boldsymbol v} -\bar{\boldsymbol u} - {\boldsymbol q} + \frac{\nabla \times {\boldsymbol B}^{n+\frac{1}{2}}}{\rho} \right) \times {\boldsymbol B}^{n+\frac{1}{2}} - \left(\frac{{\nabla p}^{n+\frac{1}{2}}}{\rho}\right)_\parallel \\
& =: \hat{\boldsymbol B}^{n+\frac{1}{2}}\left({\boldsymbol v} - \bar{\boldsymbol u} - {\boldsymbol q} + \frac{\nabla \times {\boldsymbol B}^{n+\frac{1}{2}}}{\rho}\right) - \left(\frac{{\nabla p}^{n+\frac{1}{2}}}{\rho}\right)_\parallel,
\end{aligned}
\end{equation}
the solution of which with 
{\small{${\boldsymbol v}(t^n) = {\boldsymbol v}^n$ is 
$$
{\boldsymbol v}(t) = e^{(t-t^n)\hat{\boldsymbol B}^{n+\frac{1}{2}}}\left({\boldsymbol v}^n  - \bar{\boldsymbol u} - {\boldsymbol q} + \frac{\nabla \times {\boldsymbol B}^{n+\frac{1}{2}}}{\rho} \right) + \bar{\boldsymbol u} + {\boldsymbol q} - \frac{\nabla \times {\boldsymbol B}^{n+\frac{1}{2}}}{\rho} - (t-t^n)\left(\frac{{\nabla p}^{n+\frac{1}{2}}}{\rho}\right)_\parallel.  
$$}}
The rotation matrix $e^{(t-t^n)\hat{\boldsymbol B}^{n+\frac{1}{2}}}$ can be decomposed as the product of four shear matrices~\cite{chenbao, welling,exact2,exact1}. 
Correspondingly, the distribution function can be solved via several one dimensional translations.
Specifically, the distribution function can be updated as follows,
\begin{itemize}
\item The 1st step, one dimensional advections in three velocity directions.
\begin{equation}\label{eq:firsts}
f^n({\boldsymbol v}) \rightarrow f^n\left({\boldsymbol v} + \Delta t \left(\frac{\nabla p^{n+\frac{1}{2}}}{\rho} \right)_\parallel \right) =: f^{n}_1({\boldsymbol v})\end{equation}.
\item The 2nd step, one dimensional advections in three velocity directions.\begin{equation}\label{eq:seconds}
f^{n}_1({\boldsymbol v}) \rightarrow f^{n}_1\left({\boldsymbol v} + \bar{\boldsymbol u} + {\boldsymbol q}- \frac{\nabla \times {\boldsymbol B}^{n+\frac{1}{2}}}{\rho}\right) =: f^{n}_2({\boldsymbol v}).
\end{equation}
\item The 3rd step. Solve the equation $\frac{\partial f}{\partial t} + ({\boldsymbol v} \times {\boldsymbol B}^{n+\frac{1}{2}}) \cdot \frac{\partial f}{\partial {\boldsymbol v}} = 0$ by exact splittings proposed in~\cite{exact2,exact3,exact1}, i.e,
{\small{
\begin{equation}\label{eq:exact3d}
f^{n}_3({\boldsymbol v}) = e^{-\Delta t\hat{\boldsymbol B}^{n+\frac{1}{2}}{\boldsymbol v}\cdot\nabla_{v}}f^{n}_2({\boldsymbol v}) = e^{\Delta ty^{(l)}\cdot {\boldsymbol v}\,\partial_{v_s}}\,\Pi_{k,k\neq s}e^{\Delta ty^{(k)}\cdot {\boldsymbol v}\,\partial_{v_k}}\, e^{\Delta ty^{(r)}\cdot {\boldsymbol v}\,\partial_{v_s}} f^{n}_2({\boldsymbol v}),
\end{equation}
}}
where $s$ is any fixed number in $\{1,2,3 \}$, $y^{(l)}_s = y^{(r)}_s = 0$, for $\forall k \neq s$,  $y^{(k)}_k = 0$, $k \in \{1,2,3\}$, and $y^{(l)}, y^{(r)}, y^{(k)} \in \mathbb{R}^3$ can be obtained by a iteration method given in~\cite{exact1}. $f^n_3$ is obtained from $f^n_2$ by four steps, in each step only one dimensional advections are needed.
\item The 4th step, one dimensional advections in three velocity directions.
\begin{equation}\label{eq:finals}
f^{n}_3({\boldsymbol v}) \rightarrow f^{n}_3\left({\boldsymbol v} -  \bar{\boldsymbol u} -{\boldsymbol q} + \frac{\nabla \times {\boldsymbol B}^{n+\frac{1}{2}}}{\rho}\right) =: f^{n+1}({\boldsymbol v},t).
\end{equation}
\end{itemize}  
\begin{remark}\label{rm:commutable}
When we update the distribution function via one dimensional advections, 
we can also compute the second, third and fourth procedures~\eqref{eq:seconds}-\eqref{eq:finals}, and then compute
the first procedure~\eqref{eq:firsts}, i.e., \eqref{eq:firsts} is commutative with \eqref{eq:seconds}-\eqref{eq:finals}.  The reason can be seen from the solution of the characteristics~\eqref{eq:characteristics}. The term $- \Delta t\left({{\nabla p}^{n+\frac{1}{2}}}/{\rho}\right)_\parallel$ is perpendicular to the magnetic field, and we have the following equivalent solution of the characteristics~\eqref{eq:characteristics}, i.e., ${\boldsymbol v}(t)=$
{\small{
$$
 e^{(t-t^n)\hat{\boldsymbol B}^{n+\frac{1}{2}}}\left({\boldsymbol v}^n  - \bar{\boldsymbol u} - {\boldsymbol q} + \frac{\nabla \times {\boldsymbol B}^{n+\frac{1}{2}}}{\rho} - (t-t^n)\left(\frac{{\nabla p}^{n+\frac{1}{2}}}{\rho}\right)_\parallel\right) + \bar{\boldsymbol u} + {\boldsymbol q} - \frac{\nabla \times {\boldsymbol B}^{n+\frac{1}{2}}}{\rho}.  
$$}}
\end{remark}
\begin{remark}
In practical simulations, as~\cite{palmroth2018vlasov,vlasiator} the second procedure~\eqref{eq:seconds} and last procedure~\eqref{eq:finals} are equivalent to moving the velocity grids with $$\mp \left( \bar{\boldsymbol u} + {\boldsymbol q} - \frac{\nabla \times {\boldsymbol B}^{n+\frac{1}{2}}}{\rho}\right). $$ And the procedure~\eqref{eq:exact3d} is conducted with the new velocity grids after the second procedure. 
Exact splitting and moving velocity grids have been used in~\cite{palmroth2018vlasov,vlasiator}, but the pressure term is not added in the exact splitting, also 
overcoming the cancellation problem and the conservation of momentum and energy are not investigated. 
\end{remark}

\noindent{\bf Full discretization.}
We denote the above four procedures~\eqref{eq:firsts}-\eqref{eq:finals} with the notations $\mathcal{T}_i, i=0,1,2,3$, respectively, in which the cubic spline based semi-Lagrangian methods~\cite{semi1} are used for the one dimensional translations , i.e., 
$$
{\mathbf f}^{n+1} = \mathcal{T}_3\,\mathcal{T}_2\,\mathcal{T}_1\,\mathcal{T}_0\,{\mathbf f}^{n}.
$$
And we use Fourier spectral method to discretize the magnetic field and pressure.
Specifically, we have the following full discretization.
\begin{equation}
\label{eq:pvbfully}
    \begin{aligned}
        &{\mathbf f}^{n+1} = \mathcal{T}_3\,\mathcal{T}_2\,\mathcal{T}_1\,\mathcal{T}_0\,{\mathbf f}^{n},\\
        &{\mathbf B}_{\boldsymbol i}^{n+1} = {\mathbf B}_{\boldsymbol i}^n + \Delta t\, \nabla \times \mathcal{I}_t \left(\bar{\boldsymbol u}_h \times {\boldsymbol B}_h^{n+\frac{1}{2}} - \frac{\nabla \times {\boldsymbol B}_h^{n+\frac{1}{2}}}{ \rho_h}\times {\boldsymbol B}_h^{n+\frac{1}{2}} + \frac{\nabla p_h}{\rho_h}\right)({\mathbf x}_{\boldsymbol i}),\\
        & \frac{{p}_{\boldsymbol i}^{n+1} - {p}_{\boldsymbol i}^n}{\Delta  t} + \nabla \cdot \mathcal{I}_t({\boldsymbol u}_{e,h}^{n+\frac{1}{2}} { p}_h^{n+\frac{1}{2}})({\mathbf x}_{\boldsymbol i}) + (\gamma - 1)\, {p}_{\boldsymbol i}^{n+\frac{1}{2}} \left( \nabla \cdot \mathcal{I}{\boldsymbol u}_{e,h}^{n+\frac{1}{2}} \right)({\mathbf x}_{\boldsymbol i}) = 0,
    \end{aligned}
\end{equation}
where ${\boldsymbol u}_{e,h} = \bar{\boldsymbol u}_h - \frac{\nabla \times {\boldsymbol B}^{n+\frac{1}{2}}_h}{\rho_h}$, $\mathcal{I}_t{\boldsymbol h}$ gives the triangular interpolation polynomial for each component of the vector valued function ${\boldsymbol h}$ based on its values at grids, and 
the $\bar{\mathbf u}$ is fixed by the following condition, 
\begin{equation}\label{eq:u_int_fully}
     \bar{\mathbf u}_{\boldsymbol i} = \frac{1}{\Delta t} \int_{t^n}^{t^{n+1}}{\mathbf u}_{\boldsymbol i}(t)\, \mathrm{d}{t},
 \end{equation}
where ${\mathbf u}_{\boldsymbol i}(t)$ is the solution of equation 
\begin{equation}\label{eq:bvubarf3_fully}
\frac{\partial {\mathbf u}_{\boldsymbol i}}{\partial t} -\left({\mathbf u}_{\boldsymbol i} - \bar{\mathbf u}_{\boldsymbol i} + \frac{\nabla \times {\boldsymbol B}_h^{n+\frac{1}{2}}({\mathbf x}_{\boldsymbol i})}{\rho_{\boldsymbol i}} \right) \times {\mathbf B}_{\boldsymbol i}^{n+\frac{1}{2}} + \frac{\nabla {p}_h^{n+\frac{1}{2}}({\mathbf x}_{\boldsymbol i})}{\rho_{\boldsymbol i}} = 0, \quad {\mathbf u}_{\boldsymbol i}(t^n) = {\mathbf u}_{\boldsymbol i}^n.
\end{equation}
Similar to the time semi-discretization~\eqref{eq:bvubarf}, we have the explicit formula of the $\bar{\mathbf u}_{\boldsymbol i}$.
\begin{equation}
    \begin{aligned}
\bar{\mathbf u}_{\boldsymbol i} &= {\mathbf u}^n - \frac{\Delta t}{2}\left(\frac{\nabla {p}_h^{n+\frac{1}{2}}({\mathbf x}_{\boldsymbol i})}{\rho_{\boldsymbol i}}\right)_\parallel+ \frac{2\Delta t}{\theta^2}\sin^2\frac{\theta}{2}
\left( \frac{\nabla \times {\boldsymbol B}_h^{n+\frac{1}{2}}({\mathbf x}_{\boldsymbol i})}{\rho_{\boldsymbol i}}  - {\mathbf q}_{\boldsymbol i} \right) \times {\mathbf B}_{\boldsymbol i}^{n+\frac{1}{2}}\\
& - \frac{1}{|{\boldsymbol B}^{n+\frac{1}{2}}|^2}\left(\mathbb{I}_3 - \frac{1}{\theta}  \sin\theta\right) \left( \frac{\nabla \times {\boldsymbol B}_h^{n+\frac{1}{2}}({\mathbf x}_{\boldsymbol i})}{\rho_{\boldsymbol i}}  - {\mathbf q}_{\boldsymbol i}\right)\times {\mathbf B}_{\boldsymbol i}^{n+\frac{1}{2}} \times {\mathbf B}_{\boldsymbol i}^{n+\frac{1}{2}},
 \end{aligned}
\end{equation}
where $\theta = \Delta t |{\mathbf B}_{\boldsymbol i}^{n+\frac{1}{2}}|$.
We denote the numerical solution map of sub-step $pvb$ as $\phi_{pvb}^t$.

\begin{theorem}\label{thm:vv}
    When we assume a periodic boundary condition in space, and an infinite velocity domain,
  the mass, momentum, and energy~\eqref{eq:discretequantities} are conserved by the scheme~\eqref{eq:pvbfully} of sub-step $pvb$, the cancellation problem~\eqref{eq:cancellation} is overcome, and the magnetic field obtained is divergence free.  
\end{theorem}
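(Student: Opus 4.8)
The plan is to reduce the fully discrete statement to the already established conservation of the time semi-discretization~\eqref{eq:bvubarf}: I would control the distribution function through the moment-preservation identities of the cubic-spline semi-Lagrangian advection (Lemma~\ref{pro:semiidentity} and Proposition~\ref{pro:advecmomen}), and control the fields through the exactness of the Fourier spectral de Rham complex. First I would dispose of the two structural claims. Since the $\mathbf B$-update in~\eqref{eq:pvbfully} has the form $\mathbf B^{n+1}=\mathbf B^n+\Delta t\,\nabla\times(\cdots)$ and $\nabla\cdot\nabla\times=0$ holds identically in the spectral de Rham sequence, applying the discrete divergence gives $\nabla\cdot\mathbf B^{n+1}=\nabla\cdot\mathbf B^{n}$, so divergence-freeness propagates from the initial datum. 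For mass, each of $\mathcal T_0,\dots,\mathcal T_3$ is, at frozen spatial and transverse-velocity indices, a one-dimensional advection, so the zeroth-moment identity of Lemma~\ref{pro:semiidentity} yields $\sum_{\boldsymbol j}f^{n+1}_{\boldsymbol i\boldsymbol j}=\sum_{\boldsymbol j}f^{n}_{\boldsymbol i\boldsymbol j}$ at every node $\boldsymbol i$; in particular $\rho_{\boldsymbol i}$ is time-independent and $\mathcal M_h$ is conserved.

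The heart of the argument is to show that the discrete velocity moments of $\mathbf f$ of orders zero, one and two are transported \emph{exactly} by the affine characteristic flow~\eqref{eq:characteristics}, even though that flow is realised as the composition of the constant translations~\eqref{eq:firsts},~\eqref{eq:seconds},~\eqref{eq:finals} and the shear product~\eqref{eq:exact3d}. For a single shear $v_s\mapsto v_s+\Delta t\,(y^{(k)}\!\cdot\boldsymbol v)$ the shift is constant in $v_s$ and linear in the frozen components, so applying the first- and second-moment identities of Lemma~\ref{pro:semiidentity} direction by direction, I would verify that the discrete first moment transforms as $\mathbf J_f\mapsto S\mathbf J_f$ and the discrete second-moment tensor $\mathbf M=\sum_{\boldsymbol j}\mathbf v_{\boldsymbol j}\mathbf v_{\boldsymbol j}^{\top}f_{\boldsymbol j}$ as $\mathbf M\mapsto S\mathbf M S^{\top}$, with $S$ the associated shear matrix. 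Composing the four shears gives $\mathbf J_f\mapsto R\mathbf J_f$ and $\mathbf M\mapsto R\mathbf M R^{\top}$ with $R=e^{\Delta t\hat{\mathbf B}^{n+\frac12}}$, and orthogonality of $R$ forces $\operatorname{tr}\mathbf M$, hence $K_{f,h}$, to be preserved exactly by step~\eqref{eq:exact3d}; the constant translations contribute exactly the affine corrections of Lemma~\ref{pro:semiidentity}. Consequently the discrete moments $(\rho_{\boldsymbol i},\mathbf J_{f,\boldsymbol i},K_{f,\boldsymbol i})$ obey the same relations as in the semi-discrete proof, pointwise in $\boldsymbol i$: $\mathbf J_{f,\boldsymbol i}$ solves the moment equation~\eqref{eq:bvubarf3_fully}, so $\int_{t^n}^{t^{n+1}}\mathbf J_{f,\boldsymbol i}\,\mathrm dt=\Delta t\,\rho_{\boldsymbol i}\bar{\mathbf u}_{\boldsymbol i}$ by~\eqref{eq:u_int_fully}, and the increments~\eqref{eq:jfsemi} and~\eqref{eq:kfen} hold verbatim at the discrete level.

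Given the discrete~\eqref{eq:jfsemi}, the cancellation of the $\boldsymbol v\times\mathbf B^{n+\frac12}$ and $-\bar{\mathbf u}\times\mathbf B^{n+\frac12}$ contributions is forced by $\int\mathbf J_{f,\boldsymbol i}\,\mathrm dt=\Delta t\,\rho_{\boldsymbol i}\bar{\mathbf u}_{\boldsymbol i}$, which is precisely the statement that the cancellation problem~\eqref{eq:cancellation} is overcome. What remains, $\mathbf J_{f,\boldsymbol i}^{n+1}-\mathbf J_{f,\boldsymbol i}^{n}=\Delta t\big[(\nabla\times\mathbf B^{n+\frac12})\times\mathbf B^{n+\frac12}-\nabla p^{n+\frac12}\big]_{\boldsymbol i}$, must sum to zero over $\boldsymbol i$. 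The gradient sums to zero because its zero-frequency mode vanishes (Lemma~\ref{lemmapu} applied componentwise); for the Lorentz term I would pass to Fourier space, write the grid sum as the zero mode of the convolution, and use $\epsilon_{\alpha\beta\gamma}\epsilon_{\beta\rho\sigma}=\delta_{\gamma\rho}\delta_{\alpha\sigma}-\delta_{\gamma\sigma}\delta_{\alpha\rho}$ to split it into a piece proportional to $\boldsymbol\xi_{\boldsymbol k}\cdot\hat{\mathbf B}_{-\boldsymbol k}$, which vanishes by discrete divergence-freeness, and an odd-in-$\boldsymbol k$ piece that cancels by parity. This gives conservation of $\mathcal P_{h,\alpha}$.

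For the energy I would use the midpoint structure $a^{n+\frac12}=\tfrac12(a^n+a^{n+1})$, which yields the exact telescopings $K_{B,h}^{n+1}-K_{B,h}^{n}=\sum_{\boldsymbol i}\mathbf B^{n+\frac12}_{\boldsymbol i}\cdot(\mathbf B^{n+1}_{\boldsymbol i}-\mathbf B^{n}_{\boldsymbol i})\,\Delta\boldsymbol x$ and $K_{p,h}^{n+1}-K_{p,h}^{n}=\tfrac1{\gamma-1}\sum_{\boldsymbol i}(p^{n+1}_{\boldsymbol i}-p^{n}_{\boldsymbol i})\,\Delta\boldsymbol x$. Substituting the $\mathbf B$- and $p$-updates of~\eqref{eq:pvbfully} together with the discrete~\eqref{eq:kfen} for $K_{f,h}$, the three increments organise into the labelled groups $(a),(b),(c)$ of the semi-discrete proof, and I would verify that each group cancels, replacing every continuous integration by parts with the discrete summation-by-parts of the spectral method (skew-adjointness of the derivative matrices together with $\nabla\cdot\nabla\times=0$ and $\nabla\times\nabla=0$), and using $\int\mathbf J_{f,\boldsymbol i}\,\mathrm dt=\Delta t\,\rho_{\boldsymbol i}\bar{\mathbf u}_{\boldsymbol i}$ to identify the kinetic source terms with the field expressions. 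The main obstacle I anticipate is the second-moment step: establishing $\mathbf M\mapsto S\mathbf M S^{\top}$ requires the \emph{cross} second moments $\sum_{\boldsymbol j}v^a_{j_a}v^b_{j_b}f_{\boldsymbol j}$, and not only the one-dimensional moments literally covered by Lemma~\ref{pro:semiidentity}, to be advected exactly; this is the non-obvious ingredient that makes the rotation step preserve $K_{f,h}$ and is what separates the fully discrete result from the semi-discrete one. A secondary point is checking that the spectral treatment of the quadratic field terms through the interpolation $\mathcal I_t$ is compatible with the discrete summation-by-parts invoked above.
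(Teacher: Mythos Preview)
Your plan is correct and follows the paper's proof closely: the same structural arguments for divergence-freeness and mass, the same tracking of the discrete moments of $\mathbf f$ through the four procedures~\eqref{eq:firsts}--\eqref{eq:finals} to recover~\eqref{eq:bvubarf3_fully} and its kinetic-energy analogue~\eqref{eq:kffully}, the same use of~\eqref{eq:u_int_fully} to force the cancellation, and the same spectral summation-by-parts (orthogonality of the Fourier basis on the uniform grid) to close the $(a),(b),(c)$ groups in the energy balance. The obstacle you flag resolves cleanly: in each shear $v_s\mapsto v_s+y\cdot\boldsymbol v$ the transverse coordinates $v_a$, $a\neq s$, are frozen, so they factor out of the grid sum and the one-dimensional identities of Lemma~\ref{pro:semiidentity} applied at each fixed transverse index already give $\mathbf J_f\mapsto S\mathbf J_f$ and $\mathbf M\mapsto S\mathbf MS^{\top}$; composing yields the rotation and $\operatorname{tr}\mathbf M$ is preserved. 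The paper in fact glosses over exactly this point, simply asserting $\mathbf u^{n,3}_{\boldsymbol i}=e^{\Delta t\hat{\mathbf B}^{n+\frac12}_{\boldsymbol i}}\mathbf u^{n,2}_{\boldsymbol i}$ and $k_{f,{\boldsymbol i}}^{n,3}=k_{f,{\boldsymbol i}}^{n,2}$ under a blanket citation of Propositions~\ref{pro:advecmomen}--\ref{pro:semiidentity}, so your moment-tensor tracking is more explicit than the published argument rather than less.
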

\begin{proof} 
From~\eqref{eq:pvbfully} we have 
\begin{equation*}
\begin{aligned}
\nabla \cdot {\boldsymbol B}^{n+1}_h & = \nabla \cdot {\boldsymbol B}^{n}_h\\
&- \Delta t\, \nabla \cdot \nabla \times \mathcal{I}_t \left(\bar{\boldsymbol u}_h \times {\boldsymbol B}_h^{n+\frac{1}{2}} - \frac{\nabla \times {\boldsymbol B}_h^{n+\frac{1}{2}}}{\rho_h} \times {\boldsymbol B}_h^{n+\frac{1}{2}}+ \frac{\nabla {p}_h^{n+\frac{1}{2}}}{\rho_h}\right)
\end{aligned}
\end{equation*}
As $\nabla \cdot \nabla \times =0$, we know the magnetic field ${\boldsymbol B}_h$ is divergence free if it holds initially.

The discrete mass, $\mathcal{M}_h^n$, is conserved, as the distribution function is updated by the one dimensional advections in~\eqref{eq:firsts}-\eqref{eq:finals}, each of which conserves the mass due to the first identity in Proposition~\ref{pro:semiidentity}. 

With proposition~\ref{pro:advecmomen}-\ref{pro:semiidentity}, we get the following updated mean velocity correspondingly to the four procedures~\eqref{eq:firsts}-\eqref{eq:finals},
$$
{\mathbf u}^{n,1}_{\boldsymbol i} = {\mathbf u}^n_{\boldsymbol i} - \Delta t \left(\frac{\nabla {p}_h^{n+\frac{1}{2}}({\mathbf x}_{\boldsymbol i})}{\rho_{\boldsymbol i}}\right)_\parallel,  $$ 
$$
{\mathbf u}^{n,2}_{\boldsymbol i} = {\mathbf u}^{n,1}_{\boldsymbol i} - \bar{{\mathbf u}}_{\boldsymbol i} - {\mathbf q}_{\boldsymbol i} + \frac{\nabla \times {\boldsymbol B}_h^{n+\frac{1}{2}}({\mathbf x}_{\boldsymbol i})}{\rho_{\boldsymbol i}} ,  $$ 
$$
{\mathbf u}^{n,3}_{\boldsymbol i} = e^{\Delta t\hat{\mathbf B}^{n+\frac{1}{2}}_{\boldsymbol i}} {\boldsymbol u}^{n,2}_{\boldsymbol i},  $$ 
$$
{\mathbf u}^{n+1}_{\boldsymbol i} =  {\mathbf u}^{n,3}_{\boldsymbol i} + \bar{{\mathbf u}}_{\boldsymbol i} + {\mathbf q}_{\boldsymbol i} - \frac{\nabla \times {\boldsymbol B}_h^{n+\frac{1}{2}}({\mathbf x}_{\boldsymbol i})}{\rho_{\boldsymbol i}} . 
$$
It is easy to verify that ${\mathbf u}_{\boldsymbol i}^{n+1}$ is the solution of the equation~\eqref{eq:bvubarf3_fully} at $t=t^{n+1}$ with the condition ${\mathbf u}_{\boldsymbol i}(t^n) = {\mathbf u}^n_{\boldsymbol i}$. Then by integrating the equation~\eqref{eq:bvubarf3_fully} we have 
$$
{\mathbf u}^{n+1}_{\boldsymbol i} - {\mathbf u}_{\boldsymbol i}^n = \int_{t^n}^{t^{n+1}} \left({\mathbf u}_{\boldsymbol i} - \bar{\mathbf u}_{\boldsymbol i} + \frac{\nabla \times {\boldsymbol B}_h^{n+\frac{1}{2}}({\mathbf x}_{\boldsymbol i})}{\rho_{\boldsymbol i}}  \right) \times {\mathbf B}^{n+\frac{1}{2}}_{\boldsymbol i} - \frac{\nabla {p}_h^{n+\frac{1}{2}}({\mathbf x}_{\boldsymbol i})}{\rho_{\boldsymbol i}}\,  \mathrm{d}t,
$$
where the first two terms cancel to each other due to the condition~\eqref{eq:u_int_fully}. Then we have 
$$
{\mathbf u}^{n+1}_{\boldsymbol i} - {\mathbf u}^n_{\boldsymbol i} = \Delta t\, \frac{\nabla \times {\boldsymbol B}_h^{n+\frac{1}{2}}({\mathbf x}_{\boldsymbol i})}{\rho_{\boldsymbol i}}  \times {\mathbf B}_{\boldsymbol i}^{n+\frac{1}{2}} - \Delta t\, \frac{\nabla {p}_h^{n+\frac{1}{2}}({\mathbf x}_{\boldsymbol i})}{\rho_{\boldsymbol i}},
$$
 i.e., the cancellation problem~\eqref{eq:cancellation} is overcome. Then we can calculate the momentum and get 
 $$
\mathcal{P}_h^{n+1} - \mathcal{P}_h^{n} = \Delta t\, \Delta {\boldsymbol x}  \sum_{\boldsymbol i}  \nabla \times {\boldsymbol B}_h^{n+\frac{1}{2}}({\mathbf x}_{\boldsymbol i})  \times {\mathbf B}_{\boldsymbol i}^{n+\frac{1}{2}} - \Delta t\, \Delta {\boldsymbol x} \sum_{\boldsymbol i}\nabla p_h({\mathbf x}_{\boldsymbol i}),
$$
where the second term is zero, as the Fourier coefficient with zero wave number of $\nabla p_h({\mathbf x})$ is zero. As for the first term, with $\nabla \cdot {\boldsymbol B}_h^{n+\frac{1}{2}}=0$,  we have 
\begin{equation*}
    \begin{aligned}
&\sum_{\boldsymbol i}  \nabla \times {\boldsymbol B}_h^{n+\frac{1}{2}}({\mathbf x}_{\boldsymbol i})  \times {\mathbf B}_{\boldsymbol i}^{n+\frac{1}{2}} = \sum_{\boldsymbol i} \nabla \times {\boldsymbol B}_h^{n+\frac{1}{2}}({\mathbf x}_{\boldsymbol i}) \times {\boldsymbol B}_h^{n+\frac{1}{2}}({\mathbf x}_{\boldsymbol i}) \\
&= \sum_{\boldsymbol i} \nabla \cdot \left( {\boldsymbol B}_h^{n+\frac{1}{2}}{\boldsymbol B}_h^{n+\frac{1}{2}} - \frac{|{\boldsymbol B}_h^{n+\frac{1}{2}}|^2}{2}\mathbb{I}_3\right)({\mathbf x}_{\boldsymbol i}),
 \end{aligned}
\end{equation*}
which is zero due to the orthogonality of the Fourier basis. Then we have proved the momentum conservation.

We define the kinetic energy density as
$$
k_{f,{\boldsymbol i}} = \frac{1}{2} \sum_{{\boldsymbol j}} \left( |v^1_{j_1}|^2 + |v^2_{j_2}|^2 + |v^3_{j_3}|^2 \right)f_{\boldsymbol {ij}}\, \Delta {\boldsymbol v}.
$$
With proposition~\ref{pro:advecmomen}-~\ref{pro:semiidentity}, we get the following updated kinetic energy density correspondingly to the four procedures~\eqref{eq:firsts}-\eqref{eq:finals},
$$
k_{f,{\boldsymbol i}}^{n,1} = k_{f,{\boldsymbol i}}^{n} - \Delta t \, \rho_{\boldsymbol i}\left(\frac{\nabla {p}_h^{n+\frac{1}{2}}({\mathbf x}_{\boldsymbol i})}{\rho_{\boldsymbol i}}\right)_\parallel \cdot \frac{{\mathbf u}^n_{\boldsymbol i} + {\mathbf u}^{n,1}_{\boldsymbol i}}{2}
$$
$$
k_{f,{\boldsymbol i}}^{n,2} = k_{f,{\boldsymbol i}}^{n,1} - \Delta t\, \rho_{\boldsymbol i}\left(\bar{{\mathbf u}}_{\boldsymbol i} + {\mathbf q}_{\boldsymbol i} - \frac{\nabla \times {\boldsymbol B}_h^{n+\frac{1}{2}}({\mathbf x}_{\boldsymbol i})}{\rho_{\boldsymbol i}}  \right) \cdot \frac{{\mathbf u}^{n,1}_{\boldsymbol i} + {\mathbf u}^{n,2}_{\boldsymbol i}}{2} 
$$
$$
k_{f,{\boldsymbol i}}^{n,3} = k_{f,{\boldsymbol i}}^{n,2}
$$
$$
k_{f,{\boldsymbol i}}^{n+1} = k_{f,{\boldsymbol i}}^{n,3} +  \Delta t\, \rho_{\boldsymbol i}\left(\bar{{\mathbf u}}_{\boldsymbol i} + {\mathbf q}_{\boldsymbol i} - \frac{\nabla \times {\boldsymbol B}_h^{n+\frac{1}{2}}({\mathbf x}_{\boldsymbol i})}{\rho_{\boldsymbol i}}  \right) \cdot \frac{{\mathbf u}^{n,3}_{\boldsymbol i} + {\mathbf u}^{n+1}_{\boldsymbol i}}{2}.
$$
It is easy to verify that ${k}_{f,{\boldsymbol i}}^{n+1}$ is the solution of the equation
\begin{equation}
\label{eq:kffully}
    \begin{aligned}
\frac{\partial {k}_{f,{\boldsymbol i}}}{\partial t} &+  \rho_i\bar{\mathbf u}_{\boldsymbol i} \times {\mathbf B}_{\boldsymbol i}^{n+\frac{1}{2}} \cdot  {\mathbf u}_{\boldsymbol i} - {\rho}_{\boldsymbol i}\left(\frac{\nabla \times {\boldsymbol B}_h^{n+\frac{1}{2}}({\mathbf x}_{\boldsymbol i})}{\rho_{\boldsymbol i}}   \times {\mathbf B}_{\boldsymbol i}^{n+\frac{1}{2}} \right) \cdot {\mathbf u}_{\boldsymbol i}\\
& +  {\rho_{\boldsymbol i}}\frac{\nabla {p}_h^{n+\frac{1}{2}}({\mathbf x}_{\boldsymbol i})}{\rho_{\boldsymbol i}}  \cdot {\mathbf u}_{\boldsymbol i} = 0, 
\end{aligned}
\end{equation}
at $t=t^{n+1}$ with the condition ${k}_{f,{\boldsymbol i}}(t^n) = {k}_{f,{\boldsymbol i}}^n$, where the ${\mathbf u}_{\boldsymbol i}$ in~\eqref{eq:kffully} is the solution of the equation~\eqref{eq:bvubarf3_fully} at $t$ with the condition ${\mathbf u}_{\boldsymbol i}(t^n) = {\mathbf u}^n_{\boldsymbol i}$. By integrating the equation~\eqref{eq:kffully} in time we have 
\begin{equation}
\begin{aligned}
k_{f,{\boldsymbol i}}^{n+1} &- k_{f,{\boldsymbol i}}^n = -\int_{t^n}^{t^{n+1}} \rho_{\boldsymbol i}\bar{\mathbf u}_{\boldsymbol i} \times {\mathbf B}_{\boldsymbol i}^{n+\frac{1}{2}} \cdot  {\mathbf u}_{\boldsymbol i}\, \mathrm{d}t\\
&+ \int_{t^n}^{t^{n+1}}  {\rho}_{\boldsymbol i}\left(\frac{\nabla \times {\boldsymbol B}_h^{n+\frac{1}{2}}({\mathbf x}_{\boldsymbol i})}{\rho_{\boldsymbol i}}   \times {\mathbf B}_{\boldsymbol i}^{n+\frac{1}{2}} \right) \cdot {\mathbf u}_{\boldsymbol i} -  {\rho_{\boldsymbol i}}\frac{\nabla {p}_h^{n+\frac{1}{2}}({\mathbf x}_{\boldsymbol i})}{\rho_{\boldsymbol i}}  \cdot {\mathbf u}_{\boldsymbol i}\, \mathrm{d}t,
\end{aligned}
\end{equation}
where the first term is zero due to the condition~\eqref{eq:u_int_fully}. Then with the condition~\eqref{eq:u_int_fully}, we have 
\begin{equation}
\label{eq:smallkf}
\begin{aligned}
k_{f,{\boldsymbol i}}^{n+1} - k_{f,{\boldsymbol i}}^n =
 \Delta t\,  {\rho}_{\boldsymbol i}\left(\frac{\nabla \times {\boldsymbol B}_h^{n+\frac{1}{2}}({\mathbf x}_{\boldsymbol i})}{\rho_{\boldsymbol i}}   \times {\mathbf B}_{\boldsymbol i}^{n+\frac{1}{2}} \right) \cdot \bar{\mathbf u}_{\boldsymbol i} -  \Delta t {\rho_{\boldsymbol i}}\frac{\nabla {p}_h^{n+\frac{1}{2}}({\mathbf x}_{\boldsymbol i})}{\rho_{\boldsymbol i}}  \cdot \bar{\mathbf u}_{\boldsymbol i}.
\end{aligned}
\end{equation}
Summing the equation~\eqref{eq:smallkf} over ${\boldsymbol i}$ and multiplying with $\Delta {\boldsymbol x}$ gives 
\begin{equation}
\label{eq:bigkf}
\begin{aligned}
K_{f,h}^{n+1} - K_{f,h}^n &=
 \underbrace{\Delta t\, \Delta {\boldsymbol x} \sum_{\boldsymbol i} {\rho}_{\boldsymbol i}\left(\frac{\nabla \times {\boldsymbol B}_h^{n+\frac{1}{2}}({\mathbf x}_{\boldsymbol i})}{\rho_{\boldsymbol i}}   \times {\mathbf B}_{\boldsymbol i}^{n+\frac{1}{2}} \right) \cdot \bar{\mathbf u}_{\boldsymbol i}}_{a} \\
 &   \underbrace{-  \Delta t\, \Delta {\boldsymbol x} \sum_{\boldsymbol i} {\rho_{\boldsymbol i}}\frac{\nabla {p}_h^{n+\frac{1}{2}}({\mathbf x}_{\boldsymbol i})}{\rho_{\boldsymbol i}} \cdot \bar{\mathbf u}_{\boldsymbol i}}_{b}.
\end{aligned}
\end{equation}

Multiplying with ${\mathbf B}^{n+\frac{1}{2}}_{\boldsymbol i}$ on both sides of the second equation in~\eqref{eq:pvbfully} and summing with ${\boldsymbol i}$, we have 
\begin{equation*}
\begin{aligned}
K_{B,h}^{n+1} - K_{B,h}^n & = \underbrace{\Delta t\, \Delta {\boldsymbol x} \sum_{\boldsymbol i}\nabla \times {\boldsymbol B}_h^{n+\frac{1}{2}}({\mathbf x}_{\boldsymbol i}) \cdot \left(\bar{\mathbf u}_i \times {\mathbf B}_i^{n+\frac{1}{2}}\right)}_{a}\\
& - \Delta t\, \Delta {\boldsymbol x}  \sum_{\boldsymbol i}\nabla \times {\boldsymbol B}_h^{n+\frac{1}{2}}({\mathbf x}_{\boldsymbol i}) \cdot \left(\frac{\nabla \times {\boldsymbol B}_h^{n+\frac{1}{2}}({\mathbf x}_{\boldsymbol i})}{\rho_{\boldsymbol i}} \times {\mathbf B}_i^{n+\frac{1}{2}} \right)\\
& + \underbrace{\Delta t\, \Delta {\boldsymbol x}  \sum_{\boldsymbol i}\nabla \times {\boldsymbol B}_h^{n+\frac{1}{2}}({\mathbf x}_{\boldsymbol i}) \cdot \frac{\nabla {p}_h^{n+\frac{1}{2}}({\mathbf x}_{\boldsymbol i})}{\rho_{\boldsymbol i}}}_{c},
\end{aligned}
\end{equation*}
where the second term is zero.

Summing the third equation in~\eqref{eq:pvbfully} gives 
\begin{equation*}
\begin{aligned}
K_{p,h}^{n+1}& - K_{p,h}^n =  - \frac{1}{\gamma-1} \Delta t\, \Delta {\boldsymbol x}  \sum_{\boldsymbol i} \left(\nabla \times \mathcal{I}_t({\mathbf u}_{e}^{n+\frac{1}{2}} {\mathbf p}^{n+\frac{1}{2}})\right)({\mathbf x}_{\boldsymbol i})\\
&+ \underbrace{\Delta t\, \Delta {\boldsymbol x} \sum_{\boldsymbol i}\nabla {p}_{h}^{n+\frac{1}{2}}({\mathbf x}_i) \cdot  \bar{\mathbf u}_{i}^{n+\frac{1}{2}}}_{b} \underbrace{-\Delta t\, \Delta {\boldsymbol x}  \sum_{\boldsymbol i} \nabla {p}_{h}^{n+\frac{1}{2}}({\mathbf x}_i) \cdot \frac{\nabla \times {\boldsymbol B}_h^{n+\frac{1}{2}}({\mathbf x}_{\boldsymbol i})}{\rho_{\boldsymbol i}} }_{c}, 
\end{aligned}
\end{equation*}
where the first term is zero.
The remaining terms with the same label cancel with each other, and we have proved the energy conservation.

In conclusion, we have proved the mass, momentum, and energy conservation, the magnetic field is divergence free if it hold initially, also the cancellation problem~\eqref{eq:cancellation} is overcome.
\end{proof}

\begin{remark}
    The scheme~\eqref{eq:pvbfully} is not reversible. The reason is that the cubic spline based semi-Lagrangian method for one dimensional advections is not reversible.
\end{remark}
\begin{comment}
\begin{theorem}
    The scheme~\eqref{eq:pvbfully} is reversible.
\end{theorem}
\begin{proof}
    The reversibility of the scheme of the magnetic field and pressure can be proved similar to Theorem~\ref{thm:reversible1}. For the update of the distribution function in~\eqref{eq:pvbfully}, i.e., 
    $$
    {\mathbf f}^{n+1} = \mathcal{T}_3\mathcal{T}_2\mathcal{T}_1\mathcal{T}_0{\mathbf f}^{n},
    $$
    after exchanging ${\mathbf f}^{n}$ with ${\mathbf f}^{n+1}$ and set $\Delta t = -\Delta t$, we have
    $$
      {\mathbf f}^{n} = \mathcal{T}_3{\mathcal{T}}_2^{-1}\mathcal{T}_1\mathcal{T}_0^{-1}{\mathbf f}^{n+1}, 
    $$
    which is equal to 
    $$
    {\mathbf f}^{n+1} = \mathcal{T}_0\mathcal{T}_1^{-1}\mathcal{T}_2\mathcal{T}_3^{-1} {\mathbf f}^{n}.
    $$
    As $\mathcal{T}_3^{-1} = \mathcal{T}_1, \mathcal{T}_1^{-1} = \mathcal{T}_3$, and with the Remark~\ref{rm:commutable} we know $\mathcal{T}_0$ is commutative with the shifted rotation $\mathcal{T}_1^{-1}\mathcal{T}_2\mathcal{T}_3^{-1}$ , we have 
$$
    {\mathbf f}^{n+1} = \mathcal{T}_3\mathcal{T}_2\mathcal{T}_1\mathcal{T}_0{\mathbf f}^{n},
    $$
    and finished the proof.
\end{proof}
\end{comment}

\begin{remark}
    In the case of isothermal/adiabatic electrons, there is no pressure equation in the hybrid model, instead the pressure $p$ is determined by the density $\rho$ directly, i.e., $p = \kappa \rho^\gamma$. Similar to theorem~\ref{thm:vv}, the scheme~\eqref{eq:pvbfully} without the scheme of the pressure but with the relation $p = \kappa \rho^\gamma$ can be proved to preserve the mass and momentum. The energy, including  a non-quadratic thermal energy term, is not conserved~\cite{yingzhe1}. Also the magnetic field is divergence free if it is initially, and the cancellation problem~\eqref{eq:cancellation} is overcome.
\end{remark}

\subsection{Sub-step $xv$}
The equation of this sub-step is
$$
\frac{\partial f}{\partial t} = - \boldsymbol v \cdot \nabla_x f 
$$
for which only one dimensional translations need to be done, for which 
we can choose to use the 
Fourier spectral method~\eqref{eq:velospec}, or the cubic spline based semi-Lagrangian method~\eqref{eq:cubicsemi} in each spatial direction. 
We denote the numerical solution map of sub-step $pxv$ as $\phi_{xv}^t$. 
\begin{theorem}\label{thm:xv} When we assume a periodic boundary condition in space,
    the scheme~\eqref{eq:velospec} or the scheme~\eqref{eq:cubicsemi} of sub-step $xv$ conserves mass, momentum~\eqref{eq:discretequantities}, and energy~\eqref{eq:discretequantities} exactly. 
\end{theorem}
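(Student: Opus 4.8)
The plan is to exploit the fact that in the $xv$ sub-step only the distribution function $f$ evolves, while $\boldsymbol{B}$ and $p$ remain frozen; hence $K_{B,h}$ and $K_{p,h}$ in~\eqref{eq:discretequantities} are trivially unchanged and only the kinetic part $K_{f,h}$ is at stake for the energy. The key structural observation is that the advection~\eqref{eq:pressure_xv} is a pure \emph{spatial} transport in which the velocity $\boldsymbol{v}$ merely plays the role of a fixed advection speed. After directional splitting, for each fixed velocity index $\boldsymbol{j}$ the update is therefore a composition of one-dimensional translations in the three spatial directions at the constant speeds $v^1_{j_1}, v^2_{j_2}, v^3_{j_3}$.

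First I would record that all three target quantities share a common structure,
\begin{equation*}
\mathcal{Q}_h = \sum_{\boldsymbol{j}} w(\boldsymbol{v}_{\boldsymbol{j}}) \Big( \sum_{\boldsymbol{i}} f_{\boldsymbol{i}\boldsymbol{j}} \Big)\, \Delta \boldsymbol{x}\, \Delta \boldsymbol{v},
\end{equation*}
with a velocity-only weight $w$: namely $w \equiv 1$ for the mass $\mathcal{M}_h$, $w = v^\alpha_{j_\alpha}$ for the $\alpha$-th momentum $\mathcal{P}_{h,\alpha}$, and $w = \tfrac12 |\boldsymbol{v}_{\boldsymbol{j}}|^2$ for the kinetic energy $K_{f,h}$. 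Because the transport acts only on the spatial variable, each weight $w(\boldsymbol{v}_{\boldsymbol{j}})$ is a mere spectator and factors out of the update. It therefore suffices to show that the spatial sum $S_{\boldsymbol{j}} := \sum_{\boldsymbol{i}} f_{\boldsymbol{i}\boldsymbol{j}}$ is preserved for every fixed $\boldsymbol{j}$.

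Next I would establish this spatial-sum conservation one direction at a time. For the Fourier spectral scheme~\eqref{eq:velospec}, a single-direction translation preserves the grid sum by Lemma~\ref{lemmapu}; applying it successively in the three spatial directions leaves $S_{\boldsymbol{j}}$ unchanged. For the cubic spline based semi-Lagrangian scheme~\eqref{eq:cubicsemi}, the zeroth-moment identity --- the first identity of Lemma~\ref{pro:semiidentity} (equivalently Propositions 14--15 in~\cite{bookvlasov}) --- gives $\sum_i f_i^{n+1} = \sum_i f_i^n$ for each one-dimensional pass, again yielding invariance of $S_{\boldsymbol{j}}$. Summing $w(\boldsymbol{v}_{\boldsymbol{j}})\, S_{\boldsymbol{j}}$ over $\boldsymbol{j}$ then shows that $\mathcal{M}_h$, $\mathcal{P}_{h,\alpha}$, and $K_{f,h}$ are all unchanged, and combined with the frozen $K_{B,h}+K_{p,h}$ this gives exact conservation of $\mathcal{E}_h$.

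The step requiring the most care is not any computation but the structural recognition that, in contrast with the velocity-advection sub-step $pvb$, here the conserved-quantity weights depend only on $\boldsymbol{v}$ while the transport is in $\boldsymbol{x}$; this is precisely what collapses the argument to the single scalar invariant $S_{\boldsymbol{j}}$ and lets the same one-dimensional mass-conservation lemma do all the work for mass, momentum, and energy simultaneously. One should only verify that the directional splitting in space is applied identically across velocity slices, so that the weight $w(\boldsymbol{v}_{\boldsymbol{j}})$ genuinely remains constant throughout the three passes.
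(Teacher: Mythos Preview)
Your proposal is correct and follows the same route as the paper: invoke Lemma~\ref{lemmapu} for the Fourier update and the zeroth-moment identity in Lemma~\ref{pro:semiidentity} for the cubic-spline update to conclude that each one-dimensional spatial pass preserves $\sum_{\boldsymbol i} f_{\boldsymbol i\boldsymbol j}$. Your write-up is in fact more explicit than the paper's one-line proof in isolating the structural reason---that the conserved-quantity weights depend only on $\boldsymbol v$ while the transport is purely in $\boldsymbol x$---which makes clear why the single zeroth-moment identity suffices for all three quantities simultaneously.
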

\begin{proof}
By the Lemma~\ref{lemmapu} when the scheme~\eqref{eq:velospec} is used, and by Proposition~\ref{pro:semiidentity} when the cubic spline based semi-Lagrangian scheme~\eqref{eq:cubicsemi} is used, we get mass, momentum, and energy conservation. 
\end{proof}
By the composition methods~\cite{HLW}, we have the following first order Lie splitting and second order Strang splitting methods~\cite{Strang},
$$
\phi^{\Delta t}_{xv}\,\phi^{\Delta t}_{pvb}, \quad \phi^{\frac{{\Delta t}}{2}}_{xv}\,\phi^{\Delta t}_{pvb}\,\phi^{\frac{{\Delta t}}{2}}_{xv}.
$$

\begin{remark}
    Let us remark that other numerical methods can also be adopted to derive the same conservation properties. For example, the central finite difference method~\cite{chacon1} can be used for the discretization of the magnetic field and pressure. 
\end{remark}

\section{1D-2V reduced model}\label{sec:reduced}
Here, we consider a reduced model which is one dimensional in space and two dimensional in velocity, in which the distribution function is $f(t, x_1, v_1, v_2)$, and the magnetic field is in the form of ${\boldsymbol B} = (0,0,B_3(x_1))^\top$. The reduced model reads
\begin{equation}
\label{eq:reduced}
\begin{aligned}
\frac{\partial f}{\partial t} &= -v_1 \frac{\partial f}{\partial x_1} - v_2 B_3 \frac{\partial f}{\partial v_1} + v_1 B_3 \frac{\partial f}{\partial v_2} + u_2 B_3 \frac{\partial f}{\partial v_1} - u_1 B_3 \frac{\partial f}{\partial v_2} \\
& +  \frac{1}{\rho}\frac{\partial p}{\partial x_1} \frac{\partial f}{\partial v_1} + \frac{1}{\rho} B_3 \frac{\partial B_3}{\partial x_1} \frac{\partial f}{\partial v_1}\\
\frac{\partial B_3}{\partial t} &= - \frac{\partial}{\partial x_1}( u_1 B_3),\quad \rho = \int f\, \mathrm{d}{\boldsymbol v}, \quad u_1 = \frac{1}{\rho}\int v_1 f\, \mathrm{d}{\boldsymbol v}, \quad u_2 = \frac{1}{\rho}\int v_2 f\, \mathrm{d}{\boldsymbol v},\\
\frac{\partial p}{\partial t} &+ \frac{\partial}{\partial x_1} \left(\frac{\int v_1 f\, \mathrm{d}{\boldsymbol v}}{\rho} p\right) + (\gamma - 1) \,p\, \frac{\partial}{\partial x_1} \frac{\int v_1 f\, \mathrm{d}{\boldsymbol v}}{\rho} = 0.
\end{aligned}
\end{equation}
For this reduced model~\eqref{eq:reduced}, we have the following two sub-steps after Poisson splitting~\cite{2020en,yingzhe1}. As the three dimensional case, we use uniform grids in phase-space with grid numbers $M_1, N_1, N_2$ in directions $x_1, v_1, v_2$, respectively.

\noindent{\bf Sub-step $pvb$.}
The sub-step $pvb$ is
\begin{equation*}
\begin{aligned}
\frac{\partial f}{\partial t}&=  - v_2 B_3 \frac{\partial f}{\partial v_1} + v_1 B_3 \frac{\partial f}{\partial v_2} + u_2 B_3 \frac{\partial f}{\partial v_1} - u_1 B_3 \frac{\partial f}{\partial v_2} \\
& +  \frac{1}{\rho}\frac{\partial p}{\partial x_1} \frac{\partial f}{\partial v_1} + \frac{1}{\rho} B_3 \frac{\partial B_3}{\partial x_1} \frac{\partial f}{\partial v_1}\\
\frac{\partial B_3}{\partial t} &= - \frac{\partial}{\partial x_1}( u_1 B_3),\quad \rho = \int f\, \mathrm{d}{\boldsymbol v}, \quad u_1 = \frac{1}{\rho}\int v_1 f\, \mathrm{d}{\boldsymbol v}, \quad u_2 = \frac{1}{\rho}\int v_2 f\, \mathrm{d}{\boldsymbol v},\\
\frac{\partial p}{\partial t} &+ \frac{\partial}{\partial x_1} \left(\frac{\int v_1 f\, \mathrm{d}{\boldsymbol v}}{\rho} p\right) + (\gamma - 1) p \frac{\partial}{\partial x_1} \frac{\int v_1 f\, \mathrm{d}{\boldsymbol v}}{\rho} = 0.
\end{aligned}
\end{equation*}
The similar scheme to~\eqref{eq:pvbfully} can be constructed, in which the two dimensional rotation is solved by the exact splitting presented in~\cite{exact3}. The scheme overcomes the cancellation problem, conserves mass, momentum, and energy.

\noindent{\bf Sub-step $xv$.} The sub-step $xv$ is
\begin{equation}
\label{eq:1d2vpxv}
\begin{aligned}
&\frac{\partial f}{\partial t} = -v_1 \frac{\partial f}{\partial x_1}.
\end{aligned}
\end{equation}
This sub-step is solved with Fourier spectral method~\eqref{eq:velospec} or the scheme~\eqref{eq:cubicsemi}, and the mass, momentum, and energy are conserved.

\section{Numerical experiments}\label{sec:Numerical}
In this section, firstly we check the time accuracy order of the schemes constructed. Then we use the 1D-2V reduced model~\eqref{eq:reduced} to simulate Landau damping and Bernstein waves with the schemes constructed. The Fourier spectral method~\eqref{eq:velospec} is adopted for solving sub-step~\eqref{eq:1d2vpxv}. The tolerance of the Picard iteration is $10^{-14}$.

%\subsection{Performance of the linear solvers of sub-step $bv$}
%Here we check the 1D and 3D linear solvers of sub-step $bv$.

\subsection{Time accuracy order and reversibility}
Here we firstly check the time accuracy order by 1D-2V dimensional simulations. The initial conditions are:
\begin{equation*}
\begin{aligned}
   f =  \frac{1}{\pi v_T^2} (1 + \delta \rho) e^{-\frac{|v_1-0.1|^2}{v_T^2} - \frac{|v_2-0.2|^2}{v_T^2}   }, \, B_3 = 1 + 0.01\sin(2x_1), \, p = 0.09(1+\delta \rho)^{5/3},
\end{aligned}
\end{equation*}
where $\delta \rho = 0.01\sin(2x_1)$, and  $v_T = 0.4$. 
The computational parameters are as follows, 
grid number $16 \times 128 \times 128$, and domain  $[0, \pi] \times [-2.5,2.5] \times [-2.5, 2.5]$.
We use the Strang splitting $\phi^{\frac{{\Delta t}}{2}}_{xv}\phi^{\Delta t}_{pvb}\phi^{\frac{{\Delta t}}{2}}_{xv}$. In table~\ref{tab:error_convergence}, we present the $l_1$ error of $f$, $B_3$, and $p$ at $t=0.1$, the reference solution is obtained by a very small time step $0.001$. We can see when $\Delta t$ becomes smaller, the order of the error convergence becomes closer to 2, which indicates that the Strang splitting gives a second order accuracy in time.
\begin{table}[h]
    \centering
    \setlength{\tabcolsep}{2pt} % 调整列间距
    \begin{tabular}{ccccccc}
        \toprule
        $\Delta t$ & Error of $f$ & Order of $f$ & Error of $B_3$ & Order of $B_3$ & Error of $p$ & Order of $p$ \\
        \midrule
        $0.1$   & $16.97$ & -- & $1.728 \times 10^{-2}$ & -- & $2.617 \times 10^{-3}$ & -- \\
        $0.05$   & $4.734$ & $1.84$ & $5.223 \times 10^{-3}$ & $1.73$ & $7.906 \times 10^{-4}$& $1.73$\\
        $0.025$  & $1.232$ & $1.94$ & $1.379 \times 10^{-3}$ & $1.92$ & $2.086 \times 10^{-4}$ & $1.92$\\
        $0.0125$ & $0.3133$ & $1.98$  & $3.48 \times 10^{-4}$ & $1.99$ & $5.267 \times 10^{-5}$ & $1.99$\\
        $0.00625$ & $0.0805$ & $1.96$ & $8.564 \times 10^{-5}$ & $2.02$ & $1.296 \times 10^{-5}$ & $2.02$\\
        \bottomrule
    \end{tabular}
    \caption{The $l_1$ errors and convergence orders of the unknowns for different time step sizes.}
    \label{tab:error_convergence}
\end{table}

To validate the reversibility of the time semi-discretization~\eqref{eq:bvubarf}, we run the Strang splitting $\phi^{\frac{{\Delta t}}{2}}_{xv}\phi^{\Delta t}_{pvb}\phi^{\frac{{\Delta t}}{2}}_{xv}$ with grids $17 \times 64 \times 64$, in which all the one dimensional advections for the distribution functions are done with Fourier spectral methods (instead of the cubic spline interpolations) to reduce the influence from the error of phase-space discretization. We run with the above initial condition and time step size $\Delta t$ for 20 time steps, and then run with the numerical solution obtained as initial condition and the time step size $-\Delta t$ for another 20 steps, the error between the initial condition and final results is presented in Table.~\ref{tab:reversibility}, we can see the error is very small and the reversibility of the time semi-discretization~\eqref{eq:bvubarf} is validated. 

\begin{table}[h]
    \centering
    \setlength{\tabcolsep}{6pt} % 调整列间距
    \begin{tabular}{cccc}
        \toprule
        $\Delta t$ & Error of $f$  & Error of $B_3$  & Error of $p$  \\
        \midrule
        $0.1$   & $2.84 \times 10^{-9}$  & $3.07 \times 10^{-12}$ &  $4.65 \times 10^{-13}$  \\
        $0.05$ & $3.67\times 10^{-11}$ & $8.10 \times 10^{-15}$ &  $1.48 \times 10^{-15}$ \\
        $0.02$ & $3.38\times 10^{-11}$ & $5.55 \times 10^{-15}$ &  $8.88 \times 10^{-16}$ \\
        \bottomrule
    \end{tabular}
    \caption{The $l_1$ errors between the initial condition and numerical results of the unknowns for different time step sizes.}
    \label{tab:reversibility}
\end{table}

\subsection{Landau damping}
Next we simulate ion Landau damping by 1D-2V dimensional simulations without background magnetic field. Initial conditions are:
\begin{equation*}
\begin{aligned}
   f(x_1,v_1,v_2) =  \frac{1}{\pi v_T^2} (1 + \delta \rho) e^{-\frac{|v_1|^2}{v_T^2} - \frac{|v_2|^2}{v_T^2}   }, \quad B_3 = 0,
\end{aligned}
\end{equation*}
where $\delta \rho = 0.01\sin(0.4x_1)$. 
Grid numbers are $32 \times 128 \times 64$, the simulation domain is  $[0, 5\pi] \times [-8,8] \times [-8, 8]$, the time step size is $\Delta t = 0.1$, and $v_T = 1.4142$. 
As there is not magnetic field, 
we regard the ${\nabla p}/{\rho}$ parallel to the magnetic field, and the procedures~\eqref{eq:seconds}-\eqref{eq:finals} are not needed in $\phi^{t}_{pvb}$.
 
See the numerical results with the Strang splitting
$\phi^{\frac{\Delta t}{2}}_{xv} \phi^{\Delta t}_{pvb}\phi^{\frac{\Delta t}{2}}_{xv} $~\cite{HLW} for the case of the isothermal electrons with $\kappa = 6.25$ in Fig.~\ref{fig:landau}. As the pressure equation is not needed to solve in this case, the sub-step $pvb$ is the usual semi-Lagrangian advection in velocity with the electron pressure gradient and no Picard iteration is needed. $\|\rho-1\|_{2}^2$ with $\rho = \int f \,\mathrm{d}{\boldsymbol v}$ decays exponentially with a rate close to the theoretical one, which is obtained from the dispersion relation formula in~\cite{Pegasus, chacon1}. As semi-Lagrangian methods do not have noise, compared to the results of particle-in-cell method~\cite{yingzhe1}, a clear decay rate is observed.
The energy error is around $10^{-6}$. The mass error is around $10^{-14}$, and
momentum errors in $v_1$ and $v_2$ directions at the level of $10^{-14}, 10^{-17}$, respectively.

\begin{figure}[htbp]
\center{
\includegraphics[scale=0.25]{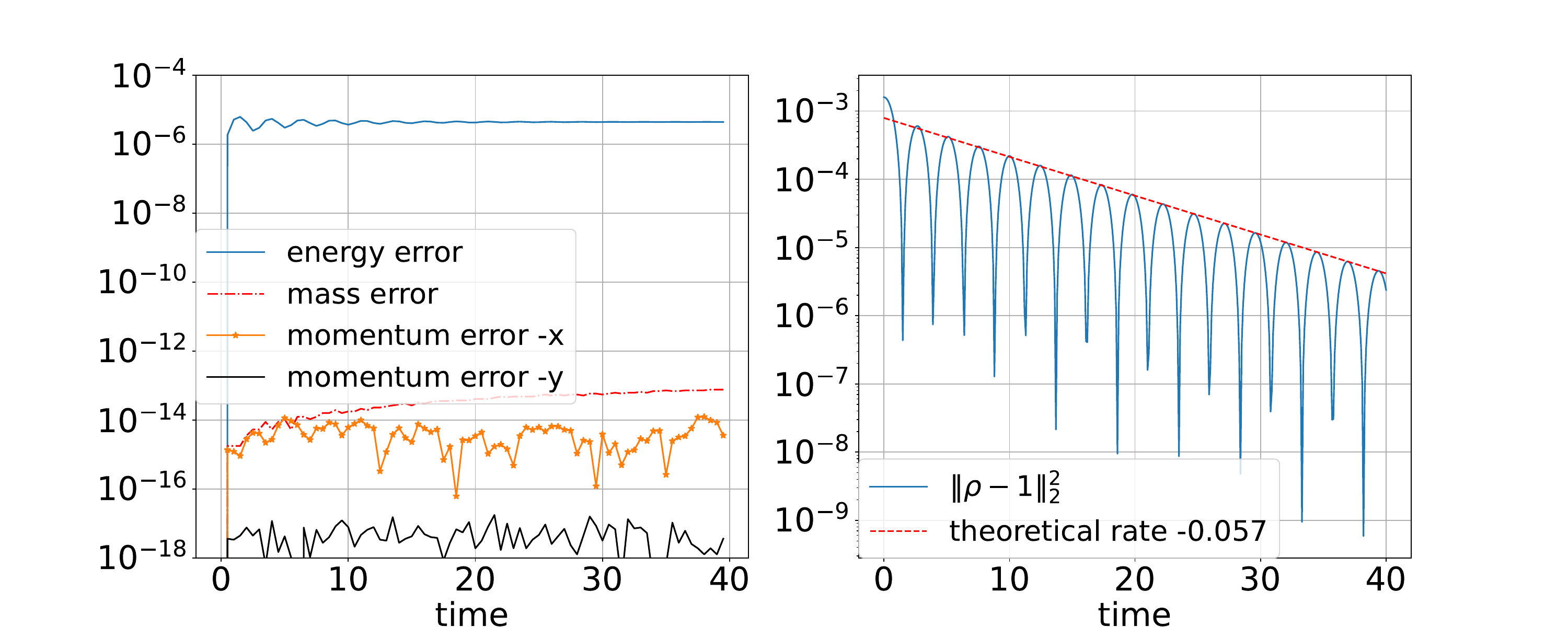}
}
\caption{{\bf Landau damping} The formulation with the isothermal electrons. Time evolutions of energy error, mass error, momentum errors, and $\|\rho - 1\|_2^2$.}
\label{fig:landau}
\end{figure}

Next we consider the case with the electron pressure equation and $\gamma = 5/3$. We use the Strang splitting
$\phi^{\frac{\Delta t}{2}}_{xv} \phi^{\Delta t}_{pvb}\phi^{\frac{\Delta t}{2}}_{xv} $~\cite{HLW}. The initial distribution function is the same as the isothermal electron case, and the initial electron pressure is 
$
p(x_1) = 6.25/\gamma (1+\delta \rho)^\gamma
$. This case is equivalent to the formulation with adiabatic electrons $p =  6.25/\gamma (1+\delta \rho)^\gamma$ and without explicitly using the pressure equation. The $\gamma$ in the denominator of $p =  6.25/\gamma (1+\delta \rho)^\gamma$  ensures that the linearized model coincides with the linearized isothermal case considered above, leading to the same decay rate for the perturbation of $\rho$. In the simulations, around 15 picard iterations are used in the sub-step $pvb$.
 See the numerical results in Fig.~\ref{fig:landau_pressure}. We can see the time evolution of $\|\rho - 1\|_{2}^2$ decays with the same rate as the isothermal electron case, mass error is around $10^{-14}$, and momentum errors are $10^{-14}, 10^{-17}$ in $x, y$ directions, respectively. The energy is also conserved very well, with an error around $10^{-12}$.

\begin{figure}[htbp]
\center{
\includegraphics[scale=0.25]{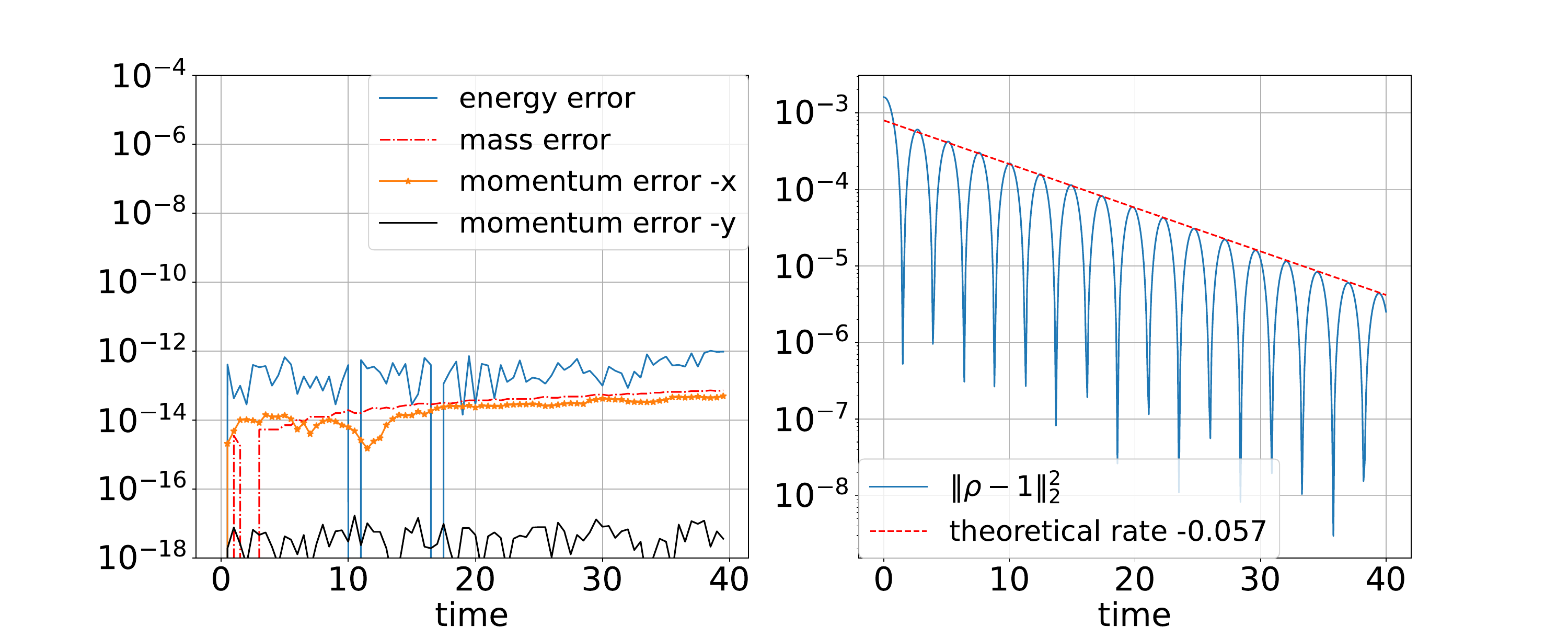}
}
\caption{{\bf Landau damping} The formulation with the electron pressure equation. Time evolutions of energy error, mass error, momentum errors, and $\|\rho - 1\|_2^2$.}
\label{fig:landau_pressure}
\end{figure}

\subsection{Bernstein waves}
We use the 1D-2V reduced model for simulating the Bernstein waves. The initial conditions of the isothermal electron case are:
\begin{equation*}
\begin{aligned}
 f(x_1,v_1,v_2) =  \frac{1}{\pi v_T^2} e^{-\frac{|v_1|^2}{v_T^2} - \frac{|v_2|^2}{v_T^2}}, \quad B_3(x_1) = 1 + 10^{-5}\sum_{k=1}^{32} \sin\left(\frac{k}{2}x_1\right).
\end{aligned}
\end{equation*}
The computational parameters are: 
grid number $64 \times 128 \times 128$, domain $[0, 4\pi]\times [-3, 3] \times [-3, 3]$, time step size $\Delta t = 0.05$, $v_T = 0.4$, final computation time $80$. 

For the isothermal electron case with $\kappa = 0.09$, 
we use the Strang splitting
$\phi^{\frac{\Delta t}{2}}_{xv} \phi^{\Delta t}_{pvb}\phi^{\frac{\Delta t}{2}}_{xv} $~\cite{HLW}.
We show the the time evolutions of energy error, momentum errors, and mass error, and the dispersion relations of Bernstein waves in Fig.~\ref{fig:bernstein_no_pressure}. The errors of mass, momentum, and energy are around $10^{-13}$, $10^{-13}$, and $10^{-11}$, respectively. 
The red lines in the second figure of Fig.~\ref{fig:bernstein_no_pressure} denote the analytical dispersion relation obtained by the Python package HYDROS~\cite{disp}, and we can see that our numerical results fit in well with the analytical dispersion relation.

Next, we consider the case with the electron pressure equation and $\gamma = 5/3$ with the Strang splitting
$\phi^{\frac{\Delta t}{2}}_{xv} \phi^{\Delta t}_{pvb}\phi^{\frac{\Delta t}{2}}_{xv} $~\cite{HLW}. 
We use the same initial condition as above and the initial value of the electron pressure is $p(x_1) = \kappa$. Note that with the initial condition we choose, the hybrid model with the electron pressure equation is equivalent to the hybrid model with adiabatic electrons, $\gamma = 5/3$, and $p=\kappa \rho^\gamma$. On the numerical side, we should also preserve the relation $p=\kappa \rho^\gamma$ as much as possible.

The computation parameters are the same as isothermal electron case. The results are presented in Fig.~\ref{fig:bernstein_with_pressure}. We can see the energy is conserved with an error around $10^{-12}$. Mass and momentum errors are around $10^{-13}$.  In the second figure, we can see several branches of the numerical dispersion relation, each branch becomes flat when the wave number becomes large, which is classical in the dispersion relation of Bernstein waves. These branches fit in well with the red lines, which denote the analytical dispersion relation of the hybrid model with adiabatic electrons by HYDROS~\cite{disp}.  The error in $l_1$ norm (around $10^{-8}$) of the relation $p = \kappa \rho^\gamma$ is shown in the first figure in Fig.~\ref{fig:bernstein_with_pressure}. 
It is also important to note that for general electrons, i.e., non-isothermal and non-adiabatic electrons, there is no such relation that must be conserved numerically.

\begin{figure}[htbp]
\center{
\includegraphics[scale=0.25]{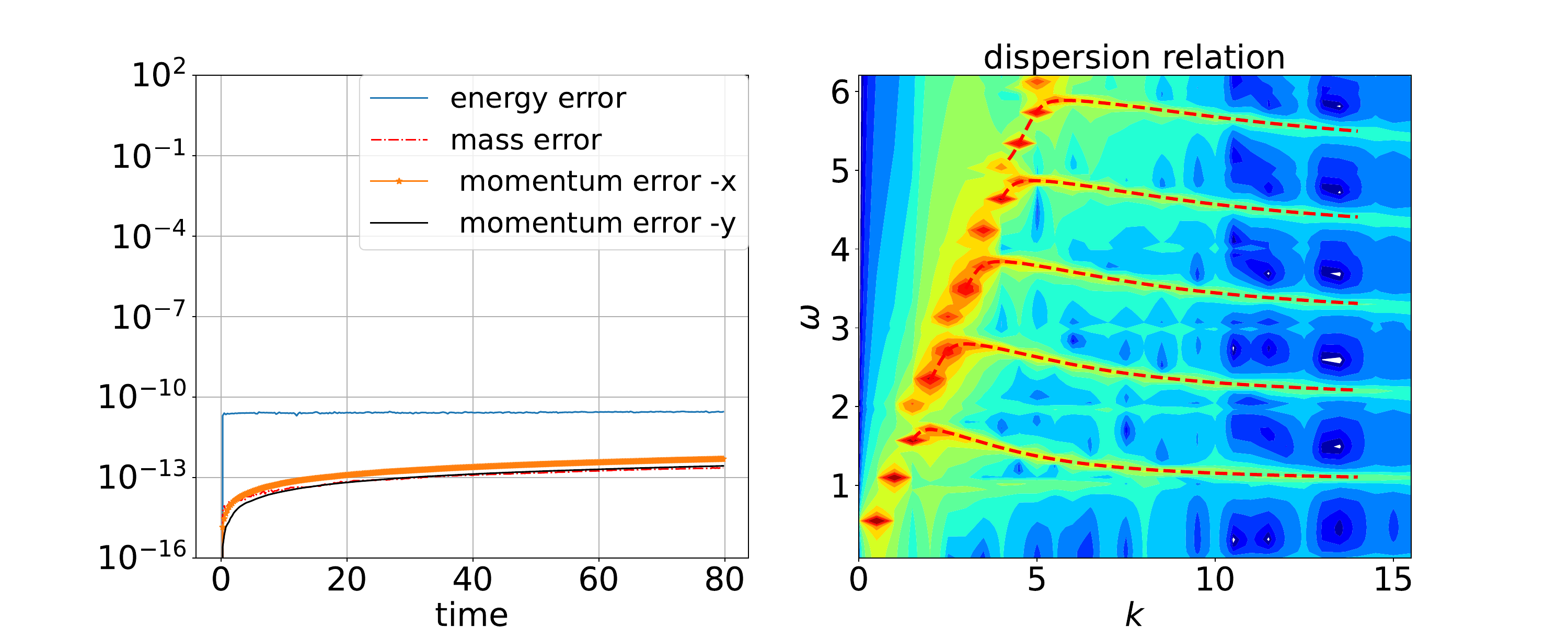}
}
\caption{{\bf Bernstein waves} The formulation with isothermal electrons. Left figure: time evolution of the errors of mass. momentum, and energy. Right figure: the dispersion relation of the Bernstein waves. }
\label{fig:bernstein_no_pressure}
\end{figure}

\begin{figure}[htbp]
\center{
\includegraphics[scale=0.25]{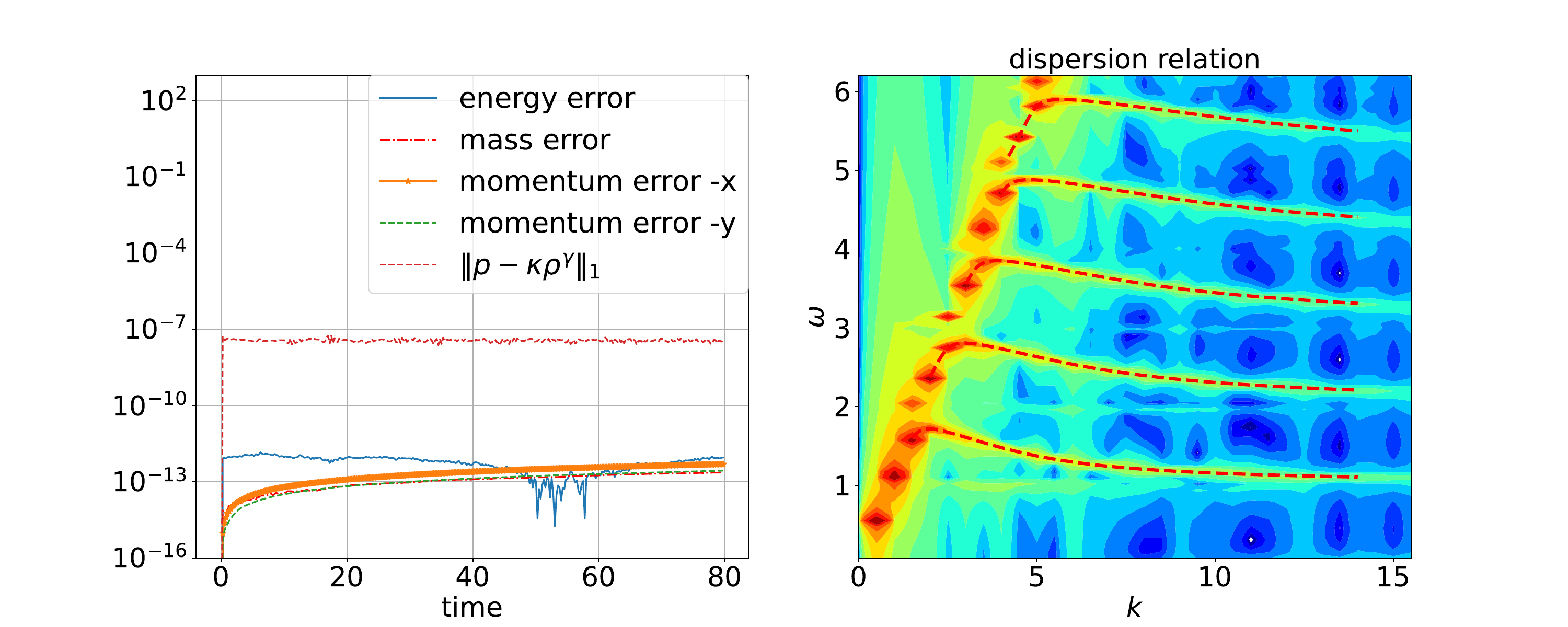}
}
\caption{{\bf Bernstein waves} The formulation with the electron pressure equation. Left figure: Time evolution of $\|p - \kappa \rho^\gamma \|_{1}$ and the errors of mass, momentum, and energy.  Right figure: the dispersion relation of the Bernstein waves. }
\label{fig:bernstein_with_pressure}
\end{figure}

\section{Conclusion}\label{sec:conclusion}
In this work, the cubic spline based semi-Lagrangian methods~\cite{semi1} are used to solve the hybrid model with kinetic ions and massless electrons. Thanks to the exact splitting constructed,
the Poisson splitting~\cite{2020en}, based on the Poisson bracket~\cite{Poisson, Tronci}, yields only two sub-steps. For the complicated sub-step involving the update of the kinetic velocity with the mean fluid velocity,  a specially designed mean velocity is chosen to ensure good conservation properties. The entire scheme is efficient, as it requires only one-dimensional advections for the distribution functions, with nonlinear iterations applied solely to the field unknowns. The scheme conserves mass, momentum, and energy, while also preserving the divergence-free condition of the magnetic field and overcome the cancellation problem~\cite{cancellation}. These conservation properties are validated through numerical experiments on Landau damping and Bernstein waves. 

At present, we are implementing the algorithms proposed in this work within Gysela-X~\cite{grandgirard20165d}, with the aim of subsequent physical applications.
Other future work includes applying the exact splitting to other electro-magnetic models,
developing efficient solvers for the nonlinear systems, performing simulations on domains with complex geometries, exploring other discretization methods, such as the numerical methods in the framework of finite element exterior calculus~\cite{feec} and discrete exterior calculus~\cite{dec}.

\section*{Acknowledgments}
Simulations in this work were performed on Max Planck Computing \& Data Facility (MPCDF). The authors would like to acknowledge Omar Maj for the kind discussions about Bernstein waves and Stefan Possanner for the fruitful discussions on this model over the years.

\bibliographystyle{siamplain}
\bibliography{references}

\begin{comment}

\end{comment}

\end{document}